\newcommand{\Z}{\mathbb{Z}}
\newcommand{\card}{\mbox{card}}
\newcommand{\Per}{\mbox{Per}}
\renewcommand{\d}{\ensuremath{\diamond}}
\numberwithin{equation}{section}
\title{Pattern occurrence statistics and applications to the\\ Ramsey theory of
unavoidable patterns\thanks{This material is based upon work supported by the National Science Foundation under Grant No. DMS--1060775.}}
\author{Jim Tao$^1$}
\newtheorem{thm}{Theorem}
\newtheorem{lma}{Lemma}
\newtheorem{cor}{Corollary}
\newtheorem{df}{Definition}
\newtheorem{ex}{Example}
\newtheorem{prob}{Problem}
\newtheorem{conj}{Conjecture}
\begin{document}

\maketitle

\footnotetext[1]{Department of Mathematics, California Institute of Technology, 1200 E California Blvd. Mathematics 253-37, Pasadena, CA 91125, USA,
{\tt jtao@caltech.edu}}

\begin{abstract}
As suggested by Currie, we apply the probabilistic method to problems
regarding pattern avoidance. Using techniques from analytic combinatorics,
we calculate asymptotic pattern occurrence statistics and use them in
conjunction with the probabilistic method to establish new results about
the Ramsey theory of unavoidable patterns in the full word case (both nonabelian sense and abelian sense) and in the partial word case. 

{\em Keywords}: Combinatorics on words; Partial words; Unavoidable patterns; Abelian patterns; Probabilistic method; Analytic combinatorics; Ramsey theory. 
\end{abstract}

\section{Introduction}

In \cite{currie}, Currie reviews results and formulates a large number of open problems concerning pattern avoidance as well as an abelian variation of it. Given a pattern $p$ over an alphabet $\mathcal{V}$ and a word $w$ over an alphabet $\mathcal{A}$, we say that $w$ {\em encounters} $p$ if there exists a nonerasing morphism $h: \mathcal{V}^* \rightarrow \mathcal{A}^*$ such that $h(p)$ is a factor of $w$; otherwise $w$ {\em avoids} $p$. In other words, $w$ encounters $p=p_1 \cdots p_n$, where $p_1, \ldots, p_n \in \mathcal{V}$,  if $w$ contains $u_1 \cdots u_n$ as a factor, where $u_1, \ldots, u_n$ are nonempty words in $\mathcal{A}^*$ satisfying $u_i=u_j$ whenever $p_i=p_j$. On the other hand,  $w$ {\em encounters} $p=p_1 \cdots p_n$ {\em in the abelian sense}  if $w$ contains $u_1 \cdots u_n$ as a factor, where $u_i$ can be obtained from $u_j$ by rearranging letters whenever $p_i=p_j$; otherwise $w$ {\em avoids} $p$ {\em in the abelian sense}.

Words avoiding patterns such as squares have been used to build several counterexamples in context-free languages \cite{MaBuHa}, groups \cite{AdNo}, lattice of varieties \cite{Jez}, partially ordered sets \cite{TrWi}, semigroups \cite{Jus,LuVa}, symbolic dynamics \cite{Mor},  to name a few. Words avoiding squares in the abelian sense have also been used in the study of free partially commutative monoids \cite{Cor,Dic}, and have helped characterize the repetitive commutative semigroups \cite{Jus}. In addition, words avoiding more general patterns find applications in algorithmic problems on algebraic structures \cite{KhSa}.  

In this paper, we meet the goal of Problem~4 as expressed by Currie in \cite{currie}, which is to ``explore the scope of application of the probabilistic method to problems in pattern avoidance.'' The probabilistic method \cite{probabilistic}, pioneered by Erd\H{o}s, has recently become one of the most powerful techniques in combinatorics. It is used to demonstrate, via statistical means, the existence of certain combinatorial objects without constructing them explicitly. Analytic combinatorics \cite{flajolet}, pioneered by Flajolet and Sedgewick and expanded to the multivariate case \cite{pemantle} by Pemantle and Wilson, allows precise calculation of the statistics of large combinatorial structures by studying their associated generating functions through the lens of complex analysis. Since analytic combinatorics calculates the statistics of large combinatorial structures, and the probabilistic method uses such statistics to infer the existence of specific combinatorial objects, we use both techniques in tandem to prove some Ramsey theoretic results about pattern
avoidance. 

We also extend some of our results to partial words, which allow for undefined positions represented by hole characters. In this context, given a pattern $p$ over $\mathcal{V}$ and a partial word $w$ over $\mathcal{A}$, we say that $w$ {\em encounters} $p$ if there exists a nonerasing morphism $h: \mathcal{V}^* \rightarrow \mathcal{A}^*$ such that $h(p)$ is {\em compatible with} a factor of $w$. Several results concerning (abelian) pattern avoidance have recently been proved in this more general context of partial words (see, for example, \cite{BSDeWSi,BSKiMeSeSiXu,BSLoSc,BSMeSiWe,BSSiXu,BSWo}).

The contents of our paper are as follows.
In Section~2, we discuss some basic concepts and fix some notations. 
In Section~3, we discuss some tools, such as the ordinary generating functions, and techniques from analytic combinatorics. 
In Sections~4 and 5, we use those tools and techniques in conjunction with the probabilistic method to calculate asymptotic pattern occurrence statistics and to establish new results about the Ramsey theory of unavoidable patterns in the full word case (both nonabelian sense and abelian sense) and the partial word case. 
Finally in Section~6, we suggest additional possible uses of these data in applications such as cryptography and musicology. We also discuss a number of open problems.

\section{Basic concepts and notations}

A (full) {\em word} over an alphabet $\mathcal A$ is a sequence of characters
from $\mathcal A$. We call the characters in $\mathcal A$ {\em letters}.
The number of characters in a word is its {\em length}. We denote by $\mathcal{A}^*$
the set of all words over $\mathcal A$; when equipped with the concatenation or product of words,
where the empty word $\varepsilon$ serves as identity, it is called
the free monoid generated by $\mathcal A$. A word $w$ over $\mathcal A$ {\em encounters}
the word $p$ over an alphabet $\mathcal V$ if $w$ contains $h(p)$ as
a factor for some nonerasing morphism $h:\mathcal V^*\rightarrow\mathcal A^*$.
Otherwise $w$ {\em avoids} $p$ and is {\em p-free}. In this case
we interpret $p$ to be a {\em pattern}. For example, the word $tennessee$ encounters the pattern $abaca$, as witnessed by
the morphism $h:\{a,b,c\}^*\rightarrow\{e,n,s,t\}^*$ with $h(a)=e$,
$h(b)=nn$, and $h(c)=ss$. Thus $tennessee$ contains $h(abaca)=ennesse$, a factor of tennessee.

We count multiple instances of a pattern in a word as follows: we say that $w$ encounters
$p$ a total of $N>0$ times if, for some maximal $m>0$, there exist $m$ distinct nonerasing morphisms
$h_i:\mathcal V^*\rightarrow\mathcal A^*$ such that for some $t_1,\ldots,t_m>0$, $h_i(p)$ is a factor of $w$ exactly
$t_i>0$ times, and $\sum_{i=1}^m t_i=N$. For example, the word $11111111$ encounters the pattern
$aba$ $34$ times because for $3\le k\le 8$, each of the $9-k$ factors of length $k$ lies in
the image of $\lfloor (k-1)/2\rfloor$ nonerasing morphisms
$\{a,b\}^*\rightarrow\{1\}^*$, and $6\cdot 1+5\cdot 1+4\cdot 2+3\cdot 2+2\cdot 3+1\cdot 3=34$.
One may object to this definition on the basis that the factor $11111111$ is counted as three occurrences
of the pattern $aba$, but since pattern occurrences are defined in terms of nonerasing morphisms, it makes sense
to count the same factor multiple times if it lies in the image of multiple distinct nonerasing morphisms.
Patterns are an abstract idea that goes beyond the concrete words that they map to under these nonerasing
morphisms; they are a kind of symmetry that exists in the words in which they appear. For that reason
the $aba$ subgroup of the ``symmetry group'' of the factor $11111111$ should be larger than that of the factor
$12345671$, just as the group of symmetries of a circle is larger than that of a square. As Hermann Weyl once said,
\begin{quote}
What has indeed become a guiding principle in modern mathematics is this lesson: \textit{Whenever you have to do with
a structure-endowed entity $\Sigma$ try to determine its group of automorphisms}, the group of those
element-wise transformations which leave all structural relations undisturbed. You can expect to gain a deep
insight into the constitution of $\Sigma$ in this way. After that you may start to investigate symmetric
configurations of elements, i.e. configurations which are invariant under a certain subgroup of the group
of all automorphisms; and it may be advisable, before looking for such configurations, to study the subgroups
themselves. \cite[p.~144]{weyl}
\end{quote}
Our definition allows a kind of intuition analogous
to counting rectangles in a rectangular grid. One may object that some rectangles are equivalent up
to similarity, but there is no reason to make problems harder than they need to be.

A {\em partial word} over $\mathcal A$ is a sequence of
characters from the extended alphabet $\mathcal A+\{\diamond\}$, where 
we refer to $\diamond$ as the {\em hole} character. Define the {\em hole density}
of a partial word to be the ratio of its number of holes to its length,
i.e. $d:=h/n$ where $d$ is the hole density, $h$ is the number of holes,
and $n$ is the length of the partial word. A {\em completion} of a partial word $w$ is a full word constructed by filling in the holes of $w$ with letters from $\mathcal A$.

If $u=u_1\cdots u_n$
and $v=v_1\cdots v_n$ are partial words of equal length $n$, where
$u_1,\ldots,u_n$ and $v_1,\ldots,v_n$ denote characters from
$\mathcal A+\{\diamond\}$, we say that $u$ is {\em compatible} with
$v$, denoted $u\uparrow v$, if $u_i=v_i$ whenever $u_i,v_i\in\mathcal A$.
A partial word $w$ over $\mathcal A$
encounters the full word $p$ over $\mathcal V$ if some factor $f$
of $w$ satisfies $f\uparrow h(p)$ for some nonerasing morphism
$h:\mathcal V^*\rightarrow\mathcal A^*$. 
Otherwise $w$ {\em avoids} $p$ and is {\em p-free}. Again
we interpret $p$ to be a {\em pattern}. For example, the partial word $velve{\diamond}ta$ encounters $abab$,
as witnessed by the morphism $h:\{a,b\}^*\rightarrow\{a,e,l,v,t\}^*$ with
$h(a)=ve$ and $h(b)=l$. Thus $h(abab)=velvel$, which is compatible
with $velve{\diamond}$, a factor of $velve{\diamond}ta$.
We count multiple instances of a pattern in a partial word as follows: we say that $w$ encounters
$p$ a total of $N>0$ times if, for some maximal $m>0$, there exist $m$ distinct nonerasing morphisms
$h_i:\mathcal V^* \rightarrow\mathcal A^*$ such that for some $t_1,\ldots,t_m>0$, there are $t_i>0$ factors $f_i$
of $w$ that satisfy $f_i\uparrow h_i(p)$, and $\sum_{i=1}^m t_i=N$.

Suppose $p=p_1\cdots p_n$ where $p_1,\ldots,p_n \in \mathcal{V}$.
A full word $w$ {\em encounters} $p$ {\em in the abelian sense} if $w$
contains $u_1\cdots u_n$ as a factor, where word $u_j$ can be obtained
from word $u_k$ by rearranging letters whenever $p_j=p_k$. Otherwise
$w$ {\em avoids} $p$ {\em in the abelian sense} and is {\em abelian p-free}.
For example, the full word $v$ $al$ $h$ $al$ $la$ encounters $abaa$
in the abelian sense. We count multiple instances of an abelian pattern
in a word as follows: we say that $w$ encounters $p$ in the abelian sense
$N>0$ times if, for some maximal $m>0$, there exist $m$ distinct sequences
of words $S_i$ of the form $(u_1,\ldots,u_n)$ such that $w$ contains
$u_1\cdots u_n$ as a factor $t_i>0$ times, word $u_j$ can be obtained
from word $u_k$ by rearranging letters whenever $p_j=p_k$, and $\sum_{i=1}^m t_i=N$.

A pattern $p$ is {\em $m$-avoidable} if there are arbitrarily long words over
an $m$-letter alphabet that avoid $p$. A pattern $p$ is {\em $m$-avoidable
over partial words} if for every $h\in\mathbb N$ there is
a partial word with $h$ holes over an $m$-letter alphabet that avoids $p$.
A pattern $p$ is {\em $m$-avoidable in the abelian sense} if there are
arbitrarily long words over an $m$-letter alphabet that avoid $p$ in
the abelian sense. Otherwise, $p$ is, respectively, $m$-{\em unavoidable}, $m$-{\em unavoidable over partial words}, and $m$-{\em unavoidable in the abelian sense}. For example, 
the {\em Zimin patterns} $Z_i$ where 
\begin{equation}
Z_1=a_1 \text { and } Z_i=Z_{i-1}a_iZ_{i-1}\label{eq21}
\end{equation}
are $m$-unavoidable for all $m\ge 1$ \cite{lothaire}. They are also $m$-unavoidable over partial words for all $m\ge 1$ as well as $m$-unavoidable in the abelian sense for all $m\ge 1$. Indeed, since $Z_i$ occurs in a partial word whenever it occurs in some completion of the partial word, $Z_i$ is unavoidable over partial words, and since all occurrences of $Z_i$ in the nonabelian sense are occurrences of $Z_i$
in the abelian sense, $Z_i$ is unavoidable in the abelian sense.

Define the {\em Ramsey length} $L(m,p)$ of an $m$-unavoidable pattern $p$ to be
the minimal length of a word over an $m$-letter alphabet that ensures the
occurrence of $p$. Similarly, define the {\em partial Ramsey length} $L_d(m,p)$
of a pattern $p$ that is $m$-unavoidable over partial words with hole density
$\ge d$ to be the minimal length of a partial word with hole density $d$ over
an $m$-letter alphabet that ensures the occurrence of $p$, and define the
{\em abelian Ramsey length} $L_{\mathrm{ab}}(m,p)$ of a pattern $p$ that is
$m$-unavoidable in the abelian sense to be the minimal length of a word over
an $m$-letter alphabet that ensures the occurrence of $p$ in the abelian sense.

We use Knuth's {\em up-arrow notation} \cite{uparrow} defined as follows:
For all integers $x,y,n$ such that $y\ge 0$ and $n\ge 1$:
\begin{equation*}
x\uparrow^n y
=\left\{
\begin{matrix}
x^y &\textrm{ if }n=1, \\
1 &\textrm{ if }y=0, \\
x\uparrow^{n-1}(x\uparrow^n(y-1)) &\textrm{ otherwise.}
\end{matrix}
\right.
\end{equation*}
More specifically, we use the double up-arrow, i.e. the above operator where
$n=2$. For example, $3\uparrow\uparrow 3=3^{3^3}$.
We make use of the following identity regarding double up-arrows 
\begin{equation}
x\uparrow\uparrow(n+1)=x^{x\uparrow\uparrow n}, \label{eq22}
\end{equation}
which follows by induction on $n\ge 0$. Using the same example as before, we observe that $3\uparrow\uparrow 3=3^{3\uparrow\uparrow 2}=3^{3^3}$.

\section{Tools and techniques}



For standard terms and theorems related to the symbolic method, we refer the reader to the book of Flajolet and Sedgewick \cite{flajolet}.

We can often specify a combinatorial class by performing a series of operations on basic ``atomic'' objects of size $1$: cartesian product $\mathcal B\times\mathcal C$, combinatorial sum (disjoint union) $\mathcal B+\mathcal C$, sequence construction $\mathrm{SEQ}(\mathcal B)$, and substitution $\mathcal B\circ\mathcal C$, where $\mathcal B, \mathcal C$ are combinatorial classes \cite[pp.~25--26,~87]{flajolet}.
As it turns out, specifications of combinatorial classes translate directly
into generating functions. According to the admissibility theorem for
ordinary generating functions \cite[pp.~27,~87]{flajolet}, the OGFs of such classes admit convenient
closed-form expressions. 


We recall some basic constructions \cite[p.~50]{flajolet}:
The class $\mathcal E=\{\varepsilon\}$ consisting of the neutral object only,
and the class $\mathcal Z$ consisting of a single ``atomic'' object
(node, letter) of size $1$ have OGFs $E(z)=1$ and $Z(z)=z$, respectively.
Let $\mathcal A=m\mathcal Z$ denote an
alphabet of $m$ letters and $\mathcal W=\text{SEQ}(\mathcal A)$
denote the set of all possible words over $\mathcal A$.
Then $\mathcal A$ and $\mathcal W$ have associated
OGFs $A(z)=mz$ and $W(z)=1/(1-mz)$, respectively.

Tuples or repetitions of letters and words make an appearance frequently in
our arguments. We construct them as follows:
Let $\mathcal J_k=\mathcal A\circ\mathcal Z^k$ be the set of all $k$-tuples
of the same letter in $\mathcal A$ and let $\mathcal K_k=\mathcal W\circ\mathcal Z^k$ be the
set of all $k$-tuples of the same word over $\mathcal A$. Then $\mathcal J_k$ and $\mathcal K_k$
have associated OGFs $J_k(z)=mz^k$ and $\mathcal K_k=1/(1-mz^k)$.


The following theorem greatly simplifies the process of finding the
asymptotics of a sequence, given knowledge of its generating function.

\begin{thm}\label{asymptotic}\cite[p.~258]{flajolet}
Let $f(z)$ be a function meromorphic at all points of the closed disc
$|z|\le R$, with poles at points $\alpha_1,\alpha_2,\ldots,\alpha_r$.
Assume that $f(z)$ is analytic at all points of $|z|=R$ and at $z=0$.
Then there exist $r$ polynomials $\{P_j\}_{j=1}^r$ such that
$$f_n:=[z^n]f(z)=\sum_{j=1}^rP_j(n)\alpha_j^{-n}+O(R^{-n}).$$
Furthermore the degree of $P_j$ is equal to the order of the pole of $f$
at $\alpha_j$ minus one.
\end{thm}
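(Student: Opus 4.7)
The plan is the classical partial-fractions-plus-Cauchy-bound argument. I will decompose $f$ into its principal parts at each pole plus an analytic remainder on the closed disc of radius $R$, identify the coefficient sequence of each principal part exactly via the binomial series, and bound the remainder's coefficients by a Cauchy estimate on the circle $|z|=R$.

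First, for each pole $\alpha_j$ of order $m_j$, let
$$S_j(z)=\sum_{k=1}^{m_j}\frac{c_{j,k}}{(z-\alpha_j)^k}, \qquad c_{j,m_j}\ne 0,$$
be the principal part of the Laurent expansion of $f$ at $\alpha_j$, and set $g(z):=f(z)-\sum_{j=1}^r S_j(z)$. Each $S_j$ is meromorphic on $\mathbb{C}$ with its only singularity at $\alpha_j$, so the subtraction removes every singularity of $f$ inside $|z|\le R$. Combined with the hypothesis that $f$ is analytic on the circle $|z|=R$, this shows that $g$ extends analytically to an open neighborhood of the closed disc of radius $R$.

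Next, I will compute $[z^n]S_j(z)$ in closed form. Using the factorization $\frac{1}{(z-\alpha_j)^k}=\frac{(-1)^k}{\alpha_j^k}\,\frac{1}{(1-z/\alpha_j)^k}$ together with the binomial identity $\frac{1}{(1-u)^k}=\sum_{n\ge 0}\binom{n+k-1}{k-1}u^n$, one obtains
$$[z^n]\frac{1}{(z-\alpha_j)^k}=\frac{(-1)^k}{\alpha_j^{n+k}}\binom{n+k-1}{k-1}.$$
Since $\binom{n+k-1}{k-1}$ is a polynomial in $n$ of degree exactly $k-1$, summing over $k=1,\ldots,m_j$ produces $[z^n]S_j(z)=P_j(n)\alpha_j^{-n}$ for a polynomial $P_j$. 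Meanwhile, $g$ being analytic on the closed disc of radius $R$ allows the Cauchy integral formula on $|z|=R$ to yield $|g_n|\le M R^{-n}$ with $M=\max_{|z|=R}|g(z)|$, so $g_n=O(R^{-n})$. Adding the contributions gives the desired expansion $f_n=\sum_{j=1}^r P_j(n)\alpha_j^{-n}+O(R^{-n})$.

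The one subtlety to check is the degree assertion $\deg P_j=m_j-1$. The contribution from $k=m_j$ gives a polynomial of degree exactly $m_j-1$ with leading coefficient proportional to $c_{j,m_j}\ne 0$, while the contributions from $k<m_j$ have strictly smaller degrees in $n$; hence no cancellation can lower the degree. This is the main — and really the only — place where care is needed, and it is straightforward once the binomial expansion is in hand.
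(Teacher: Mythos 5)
Your proof is correct and follows the standard route: the paper itself states this theorem without proof, citing Flajolet and Sedgewick, and the argument given there is precisely your decomposition of $f$ into principal parts plus a remainder analytic on $|z|\le R$, with the binomial expansion of $(z-\alpha_j)^{-k}$ supplying the polynomials $P_j$ and Cauchy's estimates on $|z|=R$ supplying the $O(R^{-n})$ error. Your handling of the degree claim (the $k=m_j$ term has degree exactly $m_j-1$ and cannot be cancelled by lower-order terms) is also the standard and correct justification.
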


The following theorem formalizes our use of the probabilistic method.

\begin{thm}\label{expectation}\cite[p.~18]{probabilistic}
Let $(\Omega,\mathcal F,P)$ be a probability space and
$X:\Omega\rightarrow\mathbb R$ be a real-valued random variable, i.e. such that for all $x\in\mathbb R$,
$\{\omega\in\Omega:X(\omega)\le x\}\in\mathcal F$. Let
$$E[X]:=\int_{\Omega}\!X(\omega)\,\mathrm dP(\omega)$$
denote the mathematical expectation of $X$. If $E[X]<\infty$,
then for some $\omega\in\Omega$, $X(\omega)\le E[X]$.
\end{thm}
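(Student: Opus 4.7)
The plan is to argue by contradiction. Suppose that $X(\omega) > E[X]$ for every $\omega \in \Omega$; equivalently, the auxiliary random variable $Y := X - E[X]$ is strictly positive everywhere on $\Omega$. The goal is to show that this forces $E[Y] > 0$, which contradicts the identity $E[Y] = E[X] - E[X] = 0$.

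To pass from the pointwise positivity $Y(\omega) > 0$ to strict positivity of the integral, I would stratify $\Omega$ by the magnitude of $Y$. Define $B_n := \{\omega \in \Omega : Y(\omega) > 1/n\}$ for each integer $n \ge 1$. Because $Y > 0$ pointwise, the sets $B_n$ increase to all of $\Omega$, so continuity of $P$ from below gives $P(B_n) \to P(\Omega) = 1$. In particular there is some index $N$ with $P(B_N) > 0$, and then splitting the integral yields
$$E[Y] = \int_{\Omega} Y \, \mathrm{d}P \ge \int_{B_N} Y \, \mathrm{d}P \ge \frac{1}{N} P(B_N) > 0,$$
the desired contradiction.

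The only real obstacle, and it is a mild one, is to confirm that each set $B_n$ lies in $\mathcal{F}$ so that the probabilities and integrals above are well defined. This is handed to us by the hypothesis that $\{\omega : X(\omega) \le x\} \in \mathcal{F}$ for every $x \in \mathbb{R}$: writing $B_n = \Omega \setminus \{\omega : X(\omega) \le E[X] + 1/n\}$ exhibits $B_n$ as the complement of a measurable set. No integrability subtlety arises beyond the assumption $E[X] < \infty$, since we only ever split $\int Y \, \mathrm{d}P$ over a measurable piece on which $Y$ is bounded below by $1/N$. Once measurability is in hand, the displayed chain of inequalities is immediate and the theorem follows.
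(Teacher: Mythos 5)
Your argument is correct: the contradiction hypothesis makes $Y=X-E[X]$ strictly positive, the sets $B_n=\{Y>1/n\}$ are measurable and increase to $\Omega$, so some $P(B_N)>0$ forces $E[Y]\ge \frac{1}{N}P(B_N)>0$, contradicting $E[Y]=0$ (which uses that $E[X]$ is a finite real number, the intended reading of the hypothesis). The paper itself offers no proof of this statement --- it is quoted as a known result from the reference on the probabilistic method --- and your proof is exactly the standard first-moment argument one would find there, so there is nothing to reconcile.
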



Straightforward applications of the probabilistic method often give crude
results, as demonstrated below; nevertheless, they still provide important
qualitative information.

Define the $k$th {\em Ramsey number}
$R(k)$ to be the minimal value of $n$ in the
statement of Ramsey's theorem, Theorem~\ref{ramsey}, for a given value of $k$.

\begin{thm}\cite{ramsey}\label{ramsey}
For every positive integer $k$ there is a positive integer $n$, such that if
the edges of the complete graph on $n$ vertices are all colored either red
or blue, then there must be $k$ vertices such that all edges joining them have
the same color.
\end{thm}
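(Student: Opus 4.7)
The plan is to establish this classical Ramsey theorem by proving a stronger two-parameter version and then specializing. Namely, for every pair of positive integers $(s,t)$ I will show there exists a finite $R(s,t)$ such that any red/blue coloring of the edges of $K_{R(s,t)}$ contains either a red clique on $s$ vertices or a blue clique on $t$ vertices. Setting $s=t=k$ and taking $n=R(k,k)$ then yields the statement, with $R(k)\le R(k,k)$.

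The argument proceeds by induction on $s+t$. For the base cases I would take $R(s,2)=s$ and $R(2,t)=t$: in $K_s$, if no edge is blue then every edge is red and the whole $K_s$ is a monochromatic red clique. For the inductive step I claim the Erd\H{o}s--Szekeres recurrence $R(s,t)\le R(s-1,t)+R(s,t-1)$. Given $n=R(s-1,t)+R(s,t-1)$ vertices with any red/blue edge coloring, fix a vertex $v$. Of the $n-1$ edges incident to $v$, by the pigeonhole principle at least $R(s-1,t)$ are red or at least $R(s,t-1)$ are blue. In the red case, let $S$ be the set of red-neighbors of $v$; since $|S|\ge R(s-1,t)$, by the inductive hypothesis the complete graph on $S$ contains either a blue $K_t$ (and we are done) or a red $K_{s-1}$, which together with $v$ forms a red $K_s$. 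The blue case is symmetric.

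Unwinding the recurrence in the standard way yields the bound $R(k,k)\le\binom{2k-2}{k-1}$, which is finite and therefore certifies the theorem. Observe that none of the generating-function machinery of Section~3 nor the probabilistic tool of Theorem~\ref{expectation} is needed here --- the proof is a clean application of pigeonhole and double induction. The only subtlety, and the single place where one must think rather than compute, is recognizing that a direct induction on $k$ for the symmetric statement $R(k,k)$ does not close, so the statement must first be strengthened to an asymmetric two-parameter form; once that move is made, the inductive step is essentially forced by considering the edges at a single vertex and pigeonholing their colors.
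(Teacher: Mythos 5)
Your proof is correct: it is the classical Erd\H{o}s--Szekeres argument, strengthening the symmetric statement to the two-parameter form $R(s,t)$, establishing the recurrence $R(s,t)\le R(s-1,t)+R(s,t-1)$ by pigeonholing the edge colors at a single vertex, and unwinding to the bound $R(k,k)\le\binom{2k-2}{k-1}$. Note, however, that the paper does not prove this theorem at all --- it is stated purely as cited background (Ramsey's original result) used only to frame the discussion of the probabilistic method and Erd\H{o}s' lower bound $R(k)\ge 2^{k/2}$ --- so there is no in-paper argument to compare yours against. Your observation that the symmetric induction does not close and must be replaced by the asymmetric two-parameter induction is exactly the right key point, and your base cases and pigeonhole count ($n-1=R(s-1,t)+R(s,t-1)-1$ forces one of the two color classes at $v$ to be large enough) are handled correctly.
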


In 1947, Erd\H{o}s proved the following result. 

\begin{thm}
$R(k)\ge 2^{k/2}$ for all $k\ge 2$.
\end{thm}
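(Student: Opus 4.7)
The plan is to apply the probabilistic method (Theorem \ref{expectation}) to a uniformly random 2-coloring of the edges of $K_n$, and show that when $n < 2^{k/2}$ the expected number of monochromatic $K_k$ subgraphs is strictly less than $1$. This forces the existence of some coloring with no monochromatic $K_k$, yielding $R(k) > n$ and hence $R(k) \ge 2^{k/2}$.

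First I would set up the probability space: color each of the $\binom{n}{2}$ edges of $K_n$ independently red or blue, each with probability $1/2$. For a fixed set $S$ of $k$ vertices, let $X_S$ be the indicator that the $\binom{k}{2}$ edges induced on $S$ are monochromatic; then $P(X_S=1) = 2 \cdot 2^{-\binom{k}{2}} = 2^{1-\binom{k}{2}}$, since there are two monochromatic color patterns out of $2^{\binom{k}{2}}$. Letting $X = \sum_{|S|=k} X_S$ count the total number of monochromatic $K_k$ subgraphs, linearity of expectation gives
\begin{equation*}
E[X] = \binom{n}{k} 2^{1-\binom{k}{2}}.
\end{equation*}

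Next I would bound this quantity. Using $\binom{n}{k} \le n^k/k! \le n^k/2$ for $k \ge 2$, and taking $n = \lfloor 2^{k/2} \rfloor$, we get
\begin{equation*}
E[X] \le \frac{n^k}{2} \cdot 2^{1-\binom{k}{2}} \le \frac{2^{k^2/2}}{2} \cdot 2^{1-k(k-1)/2} = 2^{k/2}.
\end{equation*}
Hmm, that bound is too weak; the standard argument uses $\binom{n}{k} \le n^k/k!$ without the extra factor, giving $E[X] \le 2^{1+k/2}/k!$, which is less than $1$ for $k \ge 3$. So I would carry out the bound more carefully: $E[X] < 2^{k^2/2} \cdot 2^{1-k(k-1)/2}/k! = 2^{1+k/2}/k!$, and verify that $k! > 2^{1+k/2}$ for all $k \ge 3$ by a quick induction (the ratio $k!/2^{k/2}$ grows). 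The base cases $k=2$ (where $R(2)=2 \ge 2^1$ holds trivially) and $k=3$ (where $R(3)=6 \ge 2^{3/2} \approx 2.83$) can be checked by hand.

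Finally, by Theorem \ref{expectation} applied to the random variable $X$ on the finite probability space of all $2^{\binom{n}{2}}$ edge-colorings, there exists some coloring $\omega$ with $X(\omega) \le E[X] < 1$, and since $X$ takes only nonnegative integer values, $X(\omega) = 0$. Thus there is a 2-coloring of the edges of $K_n$ with no monochromatic $K_k$, which means $n < R(k)$, i.e., $R(k) > \lfloor 2^{k/2} \rfloor$, and therefore $R(k) \ge 2^{k/2}$. The main (minor) obstacle is just selecting the right form of the binomial bound so that the inequality $E[X] < 1$ is clean; everything else is an immediate application of expectation linearity and the probabilistic method.
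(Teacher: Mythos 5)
Your proof is correct: it is the classical Erd\H{o}s probabilistic argument, computing $E[X]=\binom{n}{k}2^{1-\binom{k}{2}}$ for a uniform random $2$-coloring, showing $E[X]<1$ for $n=\lfloor 2^{k/2}\rfloor$ and $k\ge 3$ via $\binom{n}{k}<n^k/k!$, and invoking the expectation principle (Theorem~\ref{expectation}) together with the hand-checked cases $k=2,3$. The paper states this theorem without proof, citing it as Erd\H{o}s' 1947 result and presenting it precisely as a ``straightforward application of the probabilistic method,'' so your argument is exactly the one the paper has in mind.
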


Erd\H{o}s' lower bound, exponential with base $\sqrt{2}$, is rough (and
so are all lower bounds on $R(k)$ proven since then) because the best known
upper bounds are exponential with base $4$. In fact, some of the major
open problems in combinatorics, according to Gowers \cite{gowers}, are the following:
Does there exist a constant $a>\sqrt{2}$ such that $R(k)\ge a^k$ for all
sufficiently large $k$?
Does there exist a constant $b<4$ such that $R(k)\le b^k$ for all
sufficiently large $k$?
Although Erd\H{o}s' lower bound for $R(k)$ is crude, it tells us valuable
information about $R(k)$, namely that it grows at least exponentially.

\section{Pattern occurrence statistics: the full word case}

We calculate pattern occurrence statistics and prove results about the Ramsey theory of unavoidable patterns in the
full word case. We calculate the mean number of occurrences of a pattern in a full word of a
given length and use that statistic to establish a lower bound on Ramsey lengths. 
We do so for the nonabelian case in Section~4.1 and the abelian case in Section~4.2.  

\subsection{The nonabelian case}

Theorem~\ref{full} together with Corollary~\ref{corfull} answer the basic question as to when a full word can avoid a given pattern.

\begin{thm}\label{full}
Suppose that a pattern $p$ uses $r$ distinct variables, where the $j$th
variable occurs $k_j\ge 1$ times. Without loss of generality, let
$k=|p|=k_1+\cdots+k_r$ and $1=k_1=\cdots=k_s<k_{s+1}\le\cdots\le k_r$.
Then the mean number of occurrences of $p$ in a full word of length $n$ 
over an alphabet of $m$ letters is
$$\widehat{\Omega}_n
\sim\frac{1}{\prod_{j=s+1}^r(m^{k_j-1}-1)}\frac{n^{s+1}}{(s+1)!}.$$
\end{thm}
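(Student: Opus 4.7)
The plan is to set up a combinatorial class $\widehat{\mathcal{O}}$ of pointed occurrences of $p$, translate it into an OGF using the admissibility theorem, and then extract coefficients via Theorem~\ref{asymptotic}. An occurrence of $p$ in a word $w$ is, by definition, a factorization $w = x\,h(p)\,y$ for some $x, y \in \mathcal{A}^*$ and some nonerasing morphism $h$, and such an $h$ is determined by a choice of nonempty image word $u_j \in \mathcal{A}^+$ for each of the $r$ distinct variables, with each $u_j$ contributing $k_j|u_j|$ letters to the overall length. Treating the prefix $x$ and suffix $y$ as arbitrary elements of $\mathrm{SEQ}(\mathcal{A})$, and each $u_j$ as an independent nonempty sequence of atoms of size $k_j$ drawn from $\mathcal{A}$ (since each letter of $u_j$ shows up $k_j$ times in the image), the symbolic method yields
\begin{equation*}
\widehat{O}(z) \;=\; \frac{1}{(1-mz)^2}\prod_{j=1}^{r}\frac{mz^{k_j}}{1-mz^{k_j}}.
\end{equation*}

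Next I would locate the dominant singularity. The $s$ variables with $k_j=1$ each contribute a factor $\frac{mz}{1-mz}$, and together with $\frac{1}{(1-mz)^2}$ they produce a pole of order exactly $s+2$ at $z=1/m$. For each $j \geq s+1$, the factor $\frac{mz^{k_j}}{1-mz^{k_j}}$ has simple poles at $z = \zeta\, m^{-1/k_j}$ for $\zeta$ a $k_j$-th root of unity; since $m \geq 2$ and $k_j \geq 2$, all of these satisfy $m^{-1/k_j} > 1/m$, so they lie strictly outside the disc $|z|\leq 1/m$. Hence $z=1/m$ is the unique dominant singularity of $\widehat{O}(z)$, of order $s+2$, and evaluating the remaining factors (which are holomorphic at $1/m$) gives
\begin{equation*}
\widehat{O}(z) \;\sim\; \frac{1}{\prod_{j=s+1}^{r}(m^{k_j-1}-1)}\cdot\frac{1}{(1-mz)^{s+2}}
\end{equation*}
as $z \to 1/m$, after the simplification $\frac{m \cdot m^{-k_j}}{1-m^{1-k_j}} = \frac{1}{m^{k_j-1}-1}$.

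Finally I would apply Theorem~\ref{asymptotic} together with $[z^n](1-mz)^{-(s+2)} = \binom{n+s+1}{s+1}m^n \sim \frac{n^{s+1}}{(s+1)!}\,m^n$ to read off $[z^n]\widehat{O}(z)$, and then divide by $m^n$ (the number of length-$n$ words over $\mathcal{A}$) to recover $\widehat{\Omega}_n$ in the claimed form. The main step to verify carefully is the pole-order bookkeeping at $z=1/m$: the length-one variables genuinely raise the order of the singularity there, while the factors with $k_j \geq 2$ are analytic at $z=1/m$ and only contribute the multiplicative constant $\prod_{j=s+1}^{r}(m^{k_j-1}-1)^{-1}$. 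Once that is settled, the rest of the argument is a routine singularity analysis, with the subdominant poles absorbed into the $O(R^{-n})$ error term of Theorem~\ref{asymptotic}.
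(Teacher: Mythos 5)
Your proposal is correct and follows essentially the same route as the paper: the same specification $\mathrm{SEQ}(\mathcal A)\times\prod_j[(\mathcal W\setminus\{\varepsilon\})\circ\mathcal Z^{k_j}]\times\mathrm{SEQ}(\mathcal A)$, the identical OGF (the paper writes each factor as $\frac{1}{1-mz^{k_j}}-1=\frac{mz^{k_j}}{1-mz^{k_j}}$), the same identification of $z=1/m$ as a dominant pole of order $s+2$ with the $k_j\ge 2$ factors contributing only the constant $\prod_{j=s+1}^r(m^{k_j-1}-1)^{-1}$, and the same coefficient extraction and normalization by $m^n$. No gaps.
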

\begin{proof}
For the {\em mean} number of occurrences of a pattern $p$,
calculations similar to those employed for the number of occurrences of a 
word \cite[p.~61]{flajolet} can be based on regular specifications. 
Each occurrence of $p$ consists of a concatenation of nonempty words
(represented by $\mathcal{W}\setminus \{\varepsilon\}=\mathrm{SEQ}(\mathcal A)\setminus \{\varepsilon\}$) repeated
$k_j$ times for the $j$th variable, surrounded by arbitrary sequences of 
letters. Thus all the occurrences of $p$ as a factor are described by
$$\widehat{\mathcal O}
=\text{SEQ}(\mathcal A)\times\prod_{j=1}^{r}[(\mathcal W\backslash\{\varepsilon\})
\circ\mathcal Z^{k_j}]\times\text{SEQ}(\mathcal A),$$
so we get
\begin{align}
\widehat{O}(z)
&=\frac{1}{(1-mz)^2}\prod_{j=1}^r\left(\frac{1}{1-mz^{k_j}}-1\right) \notag \\
&=\frac{m^rz^k}{(1-mz)^{2+s}}\prod_{j=s+1}^r\frac{1}{1-mz^{k_j}}. \label{ogf}
\end{align}
We have a pole of order $2+s$ at $z=1/m$, and poles at
the $k_j$ different $k_j$th roots of $1/m$ for $k_j\ge 2$
(which have modulus greater than $1/m$). By Theorem \ref{asymptotic},
we know that for any $R>1$, there exist polynomials
$P_1,P_{s+1},\ldots,P_r$ such that
$$[z^n]\widehat{O}(z)=P_1(n)m^n+\sum_{j=s+1}^rP_j(n)m^{n/k_j}+O(R^{-n}),$$
where the degree of $P_1$ is $s+1$. For an asymptotic equivalent of
$[z^n]\widehat{O}(z)$, only the pole at $z=1/m$ needs to be considered
because it is closest to the origin and corresponds to the fastest
exponential growth; it is the dominant singularity. We plug in
$z=1/m$ in Equation~(\ref{ogf}) for the nonsingular portion to obtain
the first-order asymptotics of the OGF near $z=1/m$:
\begin{align*}
\widehat{O}(z)
&\sim\frac{m^{r-k}}{\prod_{j=s+1}^r(1-m^{1-k_j})}\frac{1}{(1-mz)^{2+s}} \\
&=\frac{1}{\prod_{j=s+1}^r(m^{k_j-1}-1)}\frac{1}{(1-mz)^{2+s}},
\end{align*}
which correspond to the first-order asymptotics of the associated sequence,
\begin{align*}
[z^n]\widehat{O}(z)
&\sim \frac{1}{\prod_{j=s+1}^r(m^{k_j-1}-1)}\binom{n+s+1}{s+1}m^n \\
&\sim \frac{1}{\prod_{j=s+1}^r(m^{k_j-1}-1)}\frac{n^{s+1}m^n}{(s+1)!}.
\end{align*}
Therefore, the mean number of occurrences of a pattern $p$
in a word of length $n$ over an alphabet of $m$ letters is
$$\widehat{\Omega}_n
\sim\frac{1}{\prod_{j=s+1}^r(m^{k_j-1}-1)}\frac{n^{s+1}}{(s+1)!}. 
$$
\end{proof}

To illustrate Theorem~\ref{full}, consider the pattern $p=abacaba$, where $r=3$, $s=1$, and where $k_1=1$, $k_2=2$,
and $k_3=4$ denote, respectively, the number of occurrences of $c$, $b$, and $a$ in $p$. Substituting these variables, $m=12$, and $n=100$, we find
that $$\widehat{\Omega}_{100}\approx 0.26319\cdots.$$

When $\widehat{\Omega}_n<1$, we may apply Theorem \ref{expectation},
so in that case we obtain the following corollary.

\begin{cor}\label{corfull}
Suppose that a pattern $p$ uses $r$ distinct variables, where the $j$th
variable occurs $k_j\ge 1$ times. Without loss of generality, let
$|p|=k_1+\cdots+k_r$ and $1=k_1=\cdots=k_s<k_{s+1}\le\cdots\le k_r$. If
$$n<(1+o(1))\left[(s+1)!\prod_{j=s+1}^r(m^{k_j-1}-1)\right]^{\frac{1}{s+1}},$$
there is a word of length $n$ over an alphabet of
$m$ letters that avoids $p$.
\end{cor}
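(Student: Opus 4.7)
The plan is to apply the first-moment method (Theorem~\ref{expectation}) directly to the asymptotic occurrence formula from Theorem~\ref{full}. Take $(\Omega,\mathcal F,P)$ to be the set $\mathcal A^n$ of all words of length $n$ over the $m$-letter alphabet $\mathcal A$, endowed with the uniform probability measure $P(\{w\})=m^{-n}$. Let $X:\Omega\rightarrow\mathbb R$ denote the random variable that counts the total number of occurrences of $p$ in a word, using the counting convention fixed in Section~2; note that $X$ takes only non-negative integer values.

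Since $\widehat{\Omega}_n$ is defined as the mean number of occurrences of $p$ in a uniformly random word of length $n$, we have $E[X]=\widehat{\Omega}_n$. The hypothesis on $n$ is chosen precisely so that Theorem~\ref{full} yields
\[
E[X]\;\sim\;\frac{1}{\prod_{j=s+1}^r(m^{k_j-1}-1)}\,\frac{n^{s+1}}{(s+1)!}\;<\;1,
\]
with the $(1+o(1))$ factor in the hypothesis absorbing the implicit error in the asymptotic equivalence. Applying Theorem~\ref{expectation} then produces some $w\in\Omega$ with $X(w)\le E[X]<1$, and since $X$ is integer-valued this forces $X(w)=0$, i.e.\ $w$ is a word of length $n$ over an alphabet of $m$ letters that avoids $p$.

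There is essentially no obstacle beyond bookkeeping: the work amounts to rearranging $E[X]<1$ into the stated inequality on $n$ and propagating the $o(1)$ slack consistently. The only conceptual points worth verifying are (i) that the counting convention of Section~2 really does make $X$ integer-valued, so that $X\le E[X]<1$ genuinely implies $X=0$, and (ii) that the direction of the asymptotic is correct, namely that choosing $n$ strictly below the stated threshold (after absorbing the $(1+o(1))$ factor) guarantees $\widehat{\Omega}_n<1$ for all sufficiently large $n$. Both are immediate from the definitions in Section~2 and from the $\sim$-notation in Theorem~\ref{full}, so the corollary follows with no further work.
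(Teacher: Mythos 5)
Your proposal is correct and is exactly the argument the paper intends: the paper simply remarks that when $\widehat{\Omega}_n<1$ one applies Theorem~\ref{expectation}, and your write-up fills in the same first-moment reasoning (uniform measure on $\mathcal A^n$, $E[X]=\widehat{\Omega}_n<1$, integrality forces $X(w)=0$) together with the routine rearrangement of $\widehat{\Omega}_n<1$ into the stated bound on $n$.
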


This result is rather crude. It says that the maximum length
of words avoiding $aa$ is at least $m-2$, but $m$ is the cardinality of the
alphabet. Nevertheless, it says that, as a variable is repeated, the maximum
length of words avoiding the associated pattern grows at least exponentially.
For example, the maximum length of words avoiding the pattern $a^k$ is at
least $m^{k-1}-2$. The maximum length of words avoiding the Zimin pattern
$Z_i$, as defined in Equation~(\ref{eq21}), is at least
$$-1+\sqrt{2\prod_{j=1}^{i-1}(m^{2^j-1}-1)}.$$ 
Since $Z_i$ is unavoidable,
$Z_i$ has an associated Ramsey length $L(m,Z_i)$, and
we get 
\begin{equation}
L(m,Z_i)\ge(1+o(1))\sqrt{2\prod_{j=1}^{i-1}(m^{2^j-1}-1)}.\label{eq41}
\end{equation}
Substituting for example $m=12$ and $i=3$, we find that $$L(12,Z_3)\ge 194.92\cdots.$$

According to \cite[p.~101]{lothaire},
$$L(m,Z_{i})\le m^{L(m,Z_{i-1})}[L(m,Z_{i-1})+1]+L(m,Z_{i-1})$$
and $L(m,Z_2)=2m+1$. The best possible nonrecursive
upper bound for $L(m,Z_i)$ deducible from our results is cumbersome
to write, so we settle for a more convenient but less precise one, in terms of
Knuth's up-arrow notation. 

\begin{thm}
\label{upperboundzimin}
For $m\ge 2$ and $i\ge 2$, 
\begin{equation}
L(m,Z_i)<m\uparrow\uparrow(2i-1).\label{eq42}
\end{equation}
\end{thm}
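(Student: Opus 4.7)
The plan is to prove the bound by induction on $i \ge 2$, using the recursive upper bound from Lothaire as the main engine and comparing its growth rate against the iterated exponential growth of $m \uparrow\uparrow (2i-1)$.

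For the base case $i=2$, I would simply check that $L(m,Z_2)=2m+1 < m^{m^m} = m\uparrow\uparrow 3$ for all $m \ge 2$, which is immediate since $m^{m^m} \ge 2^{2^2}=16 > 5 \ge 2m+1$ when $m=2$, and the gap only widens as $m$ grows (one can verify by noting $m^{m^m} \ge m^4 > 2m+1$ for $m \ge 2$).

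For the inductive step, assume $L(m,Z_{i-1}) < m\uparrow\uparrow(2i-3)$; write $T := L(m,Z_{i-1})$ and $U := m\uparrow\uparrow(2i-3)$, so that $T < U$. Applying the recursive bound,
\begin{equation*}
L(m,Z_i) \le m^T(T+1)+T.
\end{equation*}
Since $m\ge 2$ and $T\ge 1$, we have $T+1 \le m^T$ and $T \le m^T$, so
\begin{equation*}
m^T(T+1)+T \le m^{2T}+m^T \le 2m^{2T} \le m^{2T+1}.
\end{equation*}
It remains to verify $m^{2T+1} \le m\uparrow\uparrow(2i-1) = m^{m^U}$ by the identity~(\ref{eq22}), which reduces to the inequality $2T+1 \le m^U$. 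Using $T<U$, it suffices to check $2U+1 \le m^U$, and this holds comfortably because $U = m\uparrow\uparrow(2i-3) \ge m\uparrow\uparrow 3 \ge 16$ whenever $i\ge 3$ and $m\ge 2$, while $m^U \ge 2^U$ dominates $2U+1$ for $U\ge 3$.

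The main obstacle, such as it is, lies in bookkeeping: the two jumps in the up-arrow tower (from $2i-3$ to $2i-1$) are needed because each application of the recursion roughly squares the exponent and then exponentiates once more, so a single additional layer of exponentiation would not absorb both the factor of $T+1$ and the additive $T$ term. By spending two layers instead of one, the crude estimate $m^T(T+1)+T \le m^{2T+1}$ is easily dominated by $m^{m^U}$, and the induction closes. The only delicate spot is the base of the induction, where one must confirm that $U \ge 3$ from the outset so that $2U+1 \le m^U$ already holds; this is why the statement is restricted to $i\ge 2$, with the inductive comparison only invoked for $i\ge 3$.
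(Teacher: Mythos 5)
Your proposal is correct and follows essentially the same route as the paper's proof: induction on $i$ with base case $L(m,Z_2)=2m+1<m\uparrow\uparrow 3$, the Lothaire recursion $L(m,Z_{i})\le m^{T}(T+1)+T$ as the engine, and a crude absorption of the polynomial factors into the next layers of the exponential tower (you bound the recursion by $m^{2T+1}$ and compare $2T+1$ with $m^U$, while the paper bounds it by $[m\uparrow\uparrow(2i)]^3=m^{3\,m\uparrow\uparrow(2i-1)}$ and compares $3\,m\uparrow\uparrow(2i-1)$ with $m\uparrow\uparrow(2i)$ --- the same linear-versus-exponential comparison). No gaps.
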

\begin{proof}
Since $m\ge 2$, $L(m,Z_2)=2m+1<m^{m^m}=m\uparrow\uparrow 3$ establishes
our base case. For the inductive step, assume that for some $i\ge 2$,
$$L(m,Z_i)<m\uparrow\uparrow(2i-1).$$ As stated earlier,
$$L(m,Z_{i+1})\le m^{L(m,Z_i)}[L(m,Z_i)+1]+L(m,Z_i),$$
so
\begin{align*}
L(m,Z_{i+1}) &< [m^{L(m,Z_i)}+1][L(m,Z_i)+1] \\
&<[m^{m\uparrow\uparrow(2i-1)}+1][m\uparrow\uparrow(2i-1)+1] \\
&=[m\uparrow\uparrow(2i)+1][m\uparrow\uparrow(2i-1)+1] \\
&<[m\uparrow\uparrow(2i)+1][m\uparrow\uparrow(2i)] \\
&<[m\uparrow\uparrow(2i)]^3 \\
&=m^{3 m\uparrow\uparrow(2i-1)} \\
&<m^{m\uparrow\uparrow(2i)} \\
&=m\uparrow\uparrow(2i+1),
\end{align*}
and our induction is complete. 
\end{proof}

Our derived upper bound for $L(m,Z_i)$ in Equation~(\ref{eq42}), which uses tetration,
is vastly greater than our derived lower bound for $L(m,Z_i)$ in Equation~(\ref{eq41}), which
uses repeated squaring. Nevertheless, we have established
concrete upper and lower bounds for $L(m,Z_i)$. 

\subsection{The abelian case}

Since it is not obvious whether the generating
function may be analytically continued beyond its radius of convergence,
we treat it as though it is {\em lacunary}, i.e. not analytically continuable,
and we use techniques from \cite{darboux} to calculate asymptotics.

Theorem~\ref{abelian} together with Corollary~\ref{corabelian} answer the basic question as to when a full word can avoid a given pattern in the abelian sense.

\begin{thm}\label{abelian}
Suppose that a pattern $p$ uses $r$ distinct variables, where the $j$th
variable occurs $k_j\ge 1$ times. Without loss of generality, let
$k=|p|=k_1+\cdots+k_r$ and $1=k_1=\cdots=k_s<k_{s+1}\le\cdots\le k_r$.
Then the mean number of occurrences of $p$ in the abelian sense in
a word of length $n$ over an alphabet of $m\ge 4$ letters is
$$\widehat{\Omega}_n\sim\frac{n^{s+1}}{(s+1)!}\prod_{j=s+1}^r\left[
\sum_{\ell=1}^{\infty}\frac{1}{m^{k_j\ell}}\sum_{i_1+\cdots+i_m=\ell}
\binom{\ell}{i_1,\ldots,i_m}^{k_j}\right].$$
\end{thm}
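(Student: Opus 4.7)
The plan is to mirror the generating-function argument of Theorem~\ref{full}, replacing each factor that encodes $k_j$ \emph{equal} nonempty words by a factor that encodes $k_j$ nonempty words which are all pairwise \emph{abelian equivalent}, and then to extract coefficient asymptotics by Darboux-style means because the abelian factors may fail to be analytically continuable past their circle of convergence.

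First I would set up the OGF. The number of ordered $k_j$-tuples of length-$\ell$ words over an $m$-letter alphabet that are all mutually abelian equivalent is
\[
\sum_{i_1+\cdots+i_m=\ell}\binom{\ell}{i_1,\ldots,i_m}^{k_j},
\]
obtained by choosing a content $(i_1,\ldots,i_m)$ and then independently picking any of the $\binom{\ell}{i_1,\ldots,i_m}$ rearrangements for each of the $k_j$ copies. Setting
\[
A_{k_j}(z)=\sum_{\ell=1}^{\infty}z^{k_j\ell}\sum_{i_1+\cdots+i_m=\ell}\binom{\ell}{i_1,\ldots,i_m}^{k_j},
\]
the class of all abelian occurrences of $p$ as a factor is specified by $\mathrm{SEQ}(\mathcal A)\times\prod_{j=1}^{r}A_{k_j}\times\mathrm{SEQ}(\mathcal A)$. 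Since $A_1(z)=mz/(1-mz)$, this yields
\[
\widehat{O}(z)=\frac{(mz)^s}{(1-mz)^{s+2}}\prod_{j=s+1}^{r}A_{k_j}(z).
\]

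Next I would pin down the boundary behavior at $z=1/m$ for $k_j\ge 2$. Interpreting $m^{-k_j\ell}\sum_{c}\binom{\ell}{c}^{k_j}$ as the probability that $k_j$ independent uniform length-$\ell$ words share a common content, a local-central-limit-theorem estimate based on Stirling's formula near the centered multi-index $(\ell/m,\ldots,\ell/m)$ shows this probability is of order $\ell^{-(m-1)(k_j-1)/2}$. Thus $A_{k_j}(1/m)$ converges iff $(m-1)(k_j-1)>2$, which for $k_j\ge 2$ first holds precisely at $m=4$; this is where the alphabet-size hypothesis enters. Moreover, because the coefficients of $A_{k_j}$ are nonnegative and $A_{k_j}(1/m)$ converges, the series converges absolutely and uniformly on $|z|\le 1/m$, so each $A_{k_j}$ is continuous on the closed disk.

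Finally I would extract the asymptotics. Because it is unclear whether the $A_{k_j}(z)$ admit analytic continuation past $|z|=1/m$, I would treat them as lacunary and apply the Darboux-style transfer of \cite{darboux}. The factor $(mz)^s/(1-mz)^{s+2}$ contributes a pole of order $s+2$ at $z=1/m$, so near this singularity
\[
\widehat{O}(z)\sim\frac{1}{(1-mz)^{s+2}}\prod_{j=s+1}^{r}A_{k_j}(1/m),
\]
and Darboux transfer yields $[z^n]\widehat{O}(z)\sim\binom{n+s+1}{s+1}m^n\prod_{j=s+1}^{r}A_{k_j}(1/m)$. Dividing by $m^n$ produces the claimed expression for $\widehat{\Omega}_n$. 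The main obstacle is rigorously justifying this transfer in the possible presence of a natural boundary on the circle $|z|=1/m$: writing $\prod_{j=s+1}^{r}A_{k_j}(z)=\prod_{j=s+1}^{r}A_{k_j}(1/m)+R(z)$ with $R(z)\to 0$ as $z\to 1/m$, one must show that $R(z)/(1-mz)^{s+2}$ contributes only $o(n^{s+1}m^n)$ to $[z^n]$. This is exactly where continuity of the abelian factors up to the boundary, and hence the hypothesis $m\ge 4$, becomes indispensable.
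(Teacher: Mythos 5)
Your proposal follows essentially the same route as the paper: the same specification (the paper encodes the abelian tuples as $\sum_{w}|\Per(w)|^{k_j-1}\mathcal Z^{k_j|w|}$, which equals your $\sum_{i_1+\cdots+i_m=\ell}\binom{\ell}{i_1,\ldots,i_m}^{k_j}$ count), the same OGF $\frac{(mz)^s}{(1-mz)^{s+2}}\prod_{j>s}A_{k_j}(z)$, the same observation that the $A_{k_j}$ may only be treated as lacunary, and the same use of \cite[Theorem~4]{rs}-type asymptotics (your local-CLT estimate $\ell^{-(m-1)(k_j-1)/2}$, with worst case $k_j=2$ forcing $m\ge 4$) to get convergence on the closed disc $|z|\le 1/m$.

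The one place where your write-up stops short is the coefficient-transfer step, which you correctly isolate but do not prove, and your diagnosis of what is needed there is slightly off. You assert that continuity of $\prod_{j>s}A_{k_j}$ up to the boundary is what makes the transfer go through, but for a pole of order $s+2\ge 2$ mere continuity of the companion factor at $z=1/m$ is not, in general, what the Darboux-type theorem consumes: the paper's application of \cite[Theorem~1]{darboux} requires $Q$ to be $\mathcal C^{2+s}$-smooth on the circle $|z|=1/m$ and replaces $Q$ by the Hermite interpolation polynomial matching its derivatives of orders $0,\ldots,1+s$ at the singularity, so that $[z^n](P\cdot Q)=[z^n](P\cdot H)+o(1)$ after scaling. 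That said, your reduction can in fact be closed without any smoothness beyond what you established, by an elementary argument the paper does not use: writing $Q(z)=\sum_{\ell}q_\ell z^\ell$ with $q_\ell\ge 0$ and $Q(1/m)<\infty$, one has $[z^n]\bigl(Q(z)(1-mz)^{-(s+2)}\bigr)=\sum_{\ell=0}^{n}q_\ell\binom{n-\ell+s+1}{s+1}m^{n-\ell}$, and dividing by $\binom{n+s+1}{s+1}m^n$ gives $\sum_{\ell\le n}q_\ell m^{-\ell}\prod_{t=1}^{s+1}\frac{n-\ell+t}{n+t}\to Q(1/m)$ by dominated convergence, since each ratio lies in $[0,1]$ and the dominating series is exactly the absolutely convergent $\sum_\ell q_\ell m^{-\ell}$. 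Spelling this out would make your argument complete and arguably cleaner than the paper's appeal to the Hermite-interpolation machinery (whose smoothness hypothesis is delicate at $m=4$); as written, however, the transfer remains an acknowledged gap rather than a proof.
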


\begin{proof}
For the {\em mean} number of occurrences of a pattern $p$ in the abelian sense,
calculations similar to those employed for the number of occurrences of a
pattern $p$ in the nonabelian sense can be based on regular specifications.
Each occurrence of $p$ consists of a concatenation of nonempty words repeated
$k_j$ times for the $j$th variable surrounded by arbitrary sequences of 
letters, with the additional $k_j-1$ instances of each substituted word
being allowed to permute their letters. Thus all the occurrences of $p$
as a factor in the abelian sense are described by
$$\widehat{\mathcal O}=\text{SEQ}(\mathcal A)\times\prod_{j=1}^{r}
\left(\sum_{w\in\mathcal W\backslash\{\varepsilon\}}|\Per(w)|^{k_j-1}
\mathcal Z^{k_j|w|}\right)\times\text{SEQ}(\mathcal A),$$
where $\Per(w)$ denotes the set of distinct permutations of the word $w$. So we get
\begin{align*}
\widehat{O}(z) &= \frac{1}{(1-mz)^2}\prod_{j=1}^r\left(
\sum_{\ell=1}^{\infty}z^{k_j\ell}\sum_{i_1+\cdots+i_m=\ell}
\binom{\ell}{i_1,\ldots,i_m}^{k_j}\right) \\
&= \frac{m^sz^s}{(1-mz)^{2+s}}\prod_{j=s+1}^r\left(
\sum_{\ell=1}^{\infty}z^{k_j\ell}\sum_{i_1+\cdots+i_m=\ell}
\binom{\ell}{i_1,\ldots,i_m}^{k_j}\right).
\end{align*}

Note that for $s+1\le j\le r$, $k_j\ge 2$. For $\ell\ge 2$,
$$\sum_{i_1+\cdots+i_m=\ell}
\binom{\ell}{i_1,\ldots,i_m}^{k_j}>\sum_{i_1+\cdots+i_m=\ell}
\binom{\ell}{i_1,\ldots,i_m}=m^{\ell},$$
establishing that
$$\sum_{\ell=1}^{\infty}z^{k_j\ell}\sum_{i_1+\cdots+i_m=\ell}
\binom{\ell}{i_1,\ldots,i_m}^{k_j}$$
has radius of convergence $R<1/m^{1/k_j}$.
Since $$\binom{\ell}{i_1,\ldots,i_m}<\sum_{i_1+\cdots+i_m=\ell}
\binom{\ell}{i_1,\ldots,i_m}=m^{\ell},$$
we can apply \cite[Theorem~4]{rs} and get
\begin{align*}
\frac{1}{m^{k_j\ell}}\sum_{i_1+\cdots+i_m=\ell}
\binom{\ell}{i_1,\ldots,i_m}^{k_j}
&=\sum_{i_1+\cdots+i_m=\ell}
\left[\frac{\binom{\ell}{i_1,\ldots,i_m}}{m^{\ell}}\right]^{k_j} \\
&\le\sum_{i_1+\cdots+i_m=\ell}
\left[\frac{\binom{\ell}{i_1,\ldots,i_m}}{m^{\ell}}\right]^2 \\
&=\frac{1}{m^{2\ell}}
\sum_{i_1+\cdots+i_m=\ell}\binom{\ell}{i_1,\ldots,i_m}^2 \\
&\sim m^{m/2}(4\pi\ell)^{(1-m)/2}.
\end{align*}
Consequently, since $m\ge 4$, the Riemann zeta function $\zeta(\frac{m-1}{2})=\displaystyle\sum_{\ell=1}^{\infty}\frac{1}{\ell^{\frac{m-1}{2}}}$ converges, and
\begin{align*}
\sum_{\ell=1}^{\infty}\frac{1}{m^{k_j\ell}}\sum_{i_1+\cdots+i_m=\ell}
\binom{\ell}{i_1,\ldots,i_m}^{k_j}
&\le\sum_{\ell=1}^{\infty}(1+o(1))m^{m/2}(4\pi\ell)^{(1-m)/2} \\
&\sim m^{m/2}(4\pi)^{(1-m)/2}\zeta\left(\frac{m-1}{2}\right) <\infty.
\end{align*}
Thus we establish that
$$\sum_{\ell=1}^{\infty}z^{k_j\ell}\sum_{i_1+\cdots+i_m=\ell}
\binom{\ell}{i_1,\ldots,i_m}^{k_j}$$
converges at all $z$ in the closed disc $|z|\le 1/m$, since
all the power series coefficients are nonnegative and
\begin{align*}
\left|\sum_{\ell=1}^{\infty}z^{k_j\ell}\sum_{i_1+\cdots+i_m=\ell}
\binom{\ell}{i_1,\ldots,i_m}^{k_j}\right|
&\le\sum_{\ell=1}^{\infty}|z|^{k_j\ell}\sum_{i_1+\cdots+i_m=\ell}
\binom{\ell}{i_1,\ldots,i_m}^{k_j} \\
&\le\sum_{\ell=1}^{\infty}\frac{1}{m^{k_j\ell}}\sum_{i_1+\cdots+i_m=\ell}
\binom{\ell}{i_1,\ldots,i_m}^{k_j}<\infty.
\end{align*}
In fact, $\displaystyle\sum_{\ell=1}^{\infty}z^{2\ell}\sum_{i_1+\cdots+i_m=\ell}
\binom{\ell}{i_1,\ldots,i_m}^2$ has radius of convergence $R=1/m$
since, by the Cauchy-Hadamard theorem, the radius of convergence satisfies
\begin{align*}
\frac{1}{R} &= \limsup_{\ell\rightarrow\infty}
\left[\sum_{i_1+\cdots+i_m=\ell}\binom{\ell}{i_1,\ldots,i_m}^2
\right]^{\frac{1}{2\ell}} \\
&= \limsup_{\ell\rightarrow\infty}
\left[m^{2\ell+\frac{m}{2}}(4\pi\ell)^{(1-m)/2}
\right]^{\frac{1}{2\ell}} \\
&= \lim_{\ell\rightarrow\infty}
\left[m^{1+\frac{m}{4\ell}}(4\pi\ell)^{\frac{1-m}{4\ell}}\right] \\
&= m.
\end{align*}
We may factor $\widehat{O}(z)$ as $\widehat{O}(z)=P(mz)\cdot Q(mz)$, where
$$P(z)=\dfrac{1}{(1-z)^{2+s}}$$ and $$Q(z)=z^s\displaystyle\prod_{j=s+1}^r\left[
\sum_{\ell=1}^{\infty}\left(\frac{z}{m}\right)^{k_j\ell}
\sum_{i_1+\cdots+i_m=\ell}\binom{\ell}{i_1,\ldots,i_m}^{k_j}\right].$$
Note that $Q(z)$ is analytic in $|z|<1$ and converges at all points
on the unit disc. Also note that $Q(z)$ is $\mathcal C^\infty$-smooth on the unit circle; differentiating the power series any number of times does not make it diverge. In particular, $Q(z)$ is $\mathcal C^{2+s}$-smooth on the unit circle.

Note that $P(z)$ is of global order $-2-s$ and is its own log-power expansion
of type $\mathcal O^t$ relative to $W=\{1\}$, where $t=\infty$. Since
$t=\infty>u_0=\lfloor((2+s)+(-2-s))/2\rfloor \geq 0$,
the conditions of \cite[Theorem~1]{darboux} hold. Letting
$c_0=\lfloor((2+s)-(-2-s))/2\rfloor=2+s$,
we find that $$[z^n](P(z)\cdot Q(z))=[z^n](P(z)\cdot H(z))
+o(1),$$
where $H(z)$ is the Hermite interpolation polynomial such that all its
derivatives of order $0,\ldots, 1+s$ coincide with those of $Q(z)$
at $w=1$. Note that this implies that $$H(1)=\prod_{j=s+1}^r\left[
\sum_{\ell=1}^{\infty}\frac{1}{m^{k_j\ell}}\sum_{i_1+\cdots+i_m=\ell}
\binom{\ell}{i_1,\ldots,i_m}^{k_j}\right].$$
Scaling by a factor of $m$, we get
$$[z^n]\widehat{O}(z)=[z^n](P(mz)\cdot H(mz))
+o(1).$$
Since $H(z)$ is a polynomial, the only singularity of $P(mz)\cdot H(mz)$ is
$z=1/m$, so it dominates, and by Theorem \ref{asymptotic},
\begin{align*}
[z^n]\widehat{O}(z)
&\sim\binom{n+s+1}{s+1}m^n\prod_{j=s+1}^r\left[
\sum_{\ell=1}^{\infty}\frac{1}{m^{k_j\ell}}\sum_{i_1+\cdots+i_m=\ell}
\binom{\ell}{i_1,\ldots,i_m}^{k_j}\right] \\
&\sim\frac{n^{s+1}m^n}{(s+1)!}\prod_{j=s+1}^r\left[
\sum_{\ell=1}^{\infty}\frac{1}{m^{k_j\ell}}\sum_{i_1+\cdots+i_m=\ell}
\binom{\ell}{i_1,\ldots,i_m}^{k_j}\right].
\end{align*}
Therefore, the mean number of occurrences of a pattern
$p$ in the abelian sense in a word of length $n$ over an alphabet of $m\ge 4$ letters is
$$\widehat{\Omega}_n\sim\frac{n^{s+1}}{(s+1)!}\prod_{j=s+1}^r\left[
\sum_{\ell=1}^{\infty}\frac{1}{m^{k_j\ell}}\sum_{i_1+\cdots+i_m=\ell}
\binom{\ell}{i_1,\ldots,i_m}^{k_j}\right]. 
$$
\end{proof}

To illustrate Theorem~\ref{abelian}, consider the pattern $p=aba$, where $r=2$, $s=1$, $k_1=1$, and $k_2=2$.
Substituting these variables, $m=12$, and $n=100$, and applying
\cite[Theorem~4]{rs} we find
that $$\widehat{\Omega}_{100}\approx\frac{100^2\cdot 12^{12/2}(4\pi)^{-11/2}}{2}
\zeta\left(\frac{11}{2}\right)\approx 13778.87\cdots.$$

When $\widehat{\Omega}_n<1$, we may apply Theorem \ref{expectation},
so in that case we obtain the following corollary.

\begin{cor}\label{corabelian}
Suppose that a pattern $p$ uses $r$ distinct variables, where the $j$th
variable occurs $k_j\ge 1$ times. Without loss of generality, let
$|p|=k_1+\cdots+k_r$ and $1=k_1=\cdots=k_s<k_{s+1}\le\cdots\le k_r$. For $$n<(1+o(1))\left\{(s+1)!\prod_{j=s+1}^r\left[
\sum_{\ell=1}^{\infty}\frac{1}{m^{k_j\ell}}\sum_{i_1+\cdots+i_m=\ell}
\binom{\ell}{i_1,\ldots,i_m}^{k_j}\right]^{-1}\right\}^{\frac{1}{s+1}},$$
there is a word of length $n$ over an alphabet of $m\ge 4$ letters that
avoids $p$ in the abelian sense.
\end{cor}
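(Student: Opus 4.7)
The plan is to obtain this corollary as a direct application of the probabilistic method (Theorem~\ref{expectation}) to the asymptotic count produced in Theorem~\ref{abelian}. First I would set up the natural uniform probability space $\Omega$ of all $m^n$ words of length $n$ over an $m$-letter alphabet, and let the random variable $X:\Omega\to\mathbb{R}$ count the number of occurrences of $p$ in the abelian sense in a word (in the sense defined in Section~2). By definition of the mean occurrence count in Theorem~\ref{abelian}, $E[X]=\widehat{\Omega}_n$.

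The crucial observation is that $X$ takes only nonnegative integer values. Hence whenever $E[X]<1$, Theorem~\ref{expectation} guarantees the existence of a word $w\in\Omega$ with $X(w)\le E[X]<1$, which forces $X(w)=0$. Such a $w$ is precisely a length-$n$ word over the $m$-letter alphabet that avoids $p$ in the abelian sense, which is what the corollary asserts.

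It then remains to convert the condition $E[X]<1$ into the explicit bound on $n$ appearing in the statement. Plugging the asymptotic expression from Theorem~\ref{abelian} into $\widehat{\Omega}_n<1$ gives
$$\frac{n^{s+1}}{(s+1)!}\prod_{j=s+1}^{r}\left[\sum_{\ell=1}^{\infty}\frac{1}{m^{k_j\ell}}\sum_{i_1+\cdots+i_m=\ell}\binom{\ell}{i_1,\ldots,i_m}^{k_j}\right]<1+o(1),$$
and isolating $n$ by taking $(s+1)$th roots yields exactly the stated inequality, with the $(1+o(1))$ factor absorbing the asymptotic equivalence of Theorem~\ref{abelian}. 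The hypothesis $m\ge 4$ is inherited from Theorem~\ref{abelian}, where it was used to ensure convergence of the Riemann-zeta-type series that defines the constants appearing in the product.

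There is no genuine obstacle here; the argument is the standard first-moment deduction once Theorem~\ref{abelian} is in hand. The only point requiring mild care is the passage from the asymptotic $\widehat{\Omega}_n\sim C n^{s+1}$ to an honest strict inequality: for each fixed error tolerance one obtains $\widehat{\Omega}_n < (1+\varepsilon)C n^{s+1}$ for all sufficiently large $n$, which is exactly what the $(1+o(1))$ language in the corollary encodes, and this matches the analogous passage from Theorem~\ref{full} to Corollary~\ref{corfull} in the nonabelian case.
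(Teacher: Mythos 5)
Your argument is correct and is precisely the paper's own (very terse) justification: the paper simply notes that when $\widehat{\Omega}_n<1$ one may apply Theorem~\ref{expectation} to the mean computed in Theorem~\ref{abelian}, which combined with the integrality of the occurrence count yields a word with zero abelian occurrences of $p$, and the stated bound on $n$ is just the rearrangement of $\widehat{\Omega}_n<1$. No further review is needed.
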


The upper bound for $L(m,Z_i)$ in Theorem~\ref{upperboundzimin} also applies to $L_{\mathrm{ab}}(m,Z_i)$. For a lower bound, we get the following.

\begin{cor}
Let $m\ge 4$. Then
$$L_{\mathrm{ab}}(m,Z_i)
\ge(1+o(1))\sqrt{2\prod_{j=1}^{i-1}\left[\sum_{\ell=1}^{\infty}
\frac{1}{m^{2^j\ell}}\sum_{i_1+\cdots+i_m=\ell}
\binom{\ell}{i_1,\ldots,i_m}^{2^j}\right]^{-1}}.$$
\end{cor}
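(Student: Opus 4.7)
The plan is to specialize Corollary~\ref{corabelian} to the pattern $p=Z_i$, in direct analogy with how the nonabelian lower bound in Equation~(\ref{eq41}) was derived from Corollary~\ref{corfull}. All the analytic work has already been done inside Theorem~\ref{abelian}, so the task reduces to a careful bookkeeping of the Zimin multiplicities and one reindexing of the product.

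First I would read off the parameters $r$, $s$, $k_1,\ldots,k_r$ for $Z_i$. From the recursion $Z_i=Z_{i-1}a_iZ_{i-1}$ in Equation~(\ref{eq21}), an easy induction shows that the variable $a_j$ appears exactly $2^{i-j}$ times in $Z_i$ for $1\le j\le i$. Hence the multiset of occurrence counts is $\{2^0,2^1,\ldots,2^{i-1}\}$, and after sorting in increasing order we have $r=i$, $k_1=1, k_2=2, k_3=4,\ldots,k_i=2^{i-1}$. In particular exactly one variable (namely $a_i$) occurs once, so $s=1$, and $k_j=2^{j-1}$ for $2\le j\le i$. Since $m\ge 4$, the hypothesis of Corollary~\ref{corabelian} is satisfied for $Z_i$.

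Next I would substitute these parameters into Corollary~\ref{corabelian}. With $s=1$ we obtain $(s+1)!=2$, the exponent $1/(s+1)=1/2$, and the product $\prod_{j=s+1}^{r}$ becomes $\prod_{j=2}^{i}$. Replacing $k_j$ by $2^{j-1}$ and reindexing via $j'=j-1$ turns the product into $\prod_{j'=1}^{i-1}$ with $m^{k_j\ell}$ replaced by $m^{2^{j'}\ell}$ and the exponent $k_j$ on the multinomial coefficient replaced by $2^{j'}$. The resulting threshold under the square root is exactly the quantity appearing in the statement of the corollary. Finally, by the definition of $L_{\mathrm{ab}}(m,Z_i)$, the existence of an abelian $Z_i$-free word of length $n$ forces $L_{\mathrm{ab}}(m,Z_i)>n$, so taking the supremum over admissible $n$ below this threshold yields the claimed lower bound.

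There is no real obstacle here beyond the indexing: the heavy lifting (verifying convergence of the inner sums, justifying the Darboux-type asymptotic extraction, and applying the probabilistic method via Theorem~\ref{expectation}) has already been carried out inside Theorem~\ref{abelian} and packaged into Corollary~\ref{corabelian}. The only point that requires a moment of attention is checking that the sorted list of Zimin multiplicities $1,2,4,\ldots,2^{i-1}$ has exactly one entry equal to one, so that $s=1$ and the square root (rather than some other root) appears; this is immediate since the entries $2^{j-1}$ are strictly increasing in $j$.
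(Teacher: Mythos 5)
Your proposal is correct and matches the paper's (implicit) derivation exactly: the paper states this corollary as an immediate consequence of Corollary~\ref{corabelian}, obtained by plugging in the Zimin multiplicities $r=i$, $s=1$, $k_j=2^{j-1}$ and reindexing the product, precisely as in the derivation of the nonabelian bound in Equation~(\ref{eq41}). Nothing further is needed.
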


\section{Pattern occurrence statistics: the partial word case}

Next, we investigate the case of patterns in partial words. As in the case
of full words, we calculate the mean number of pattern occurrences. First,
we take the average over all partial words of a given length. Then we average
over all strictly partial words of a given length, and finally, we take the average over all
partial words of a given length with a given hole density. The last of these
statistics, gotten through the calculation of bivariate asymptotics, allows
us to prove a lower bound on partial Ramsey lengths. 

\subsection{Mean over all partial words of a given length}

The following lemma, which can be proved by induction on $k$, will help us compare the distances of poles of
the generating function from the origin and establish one of them
as the dominant singularity.

\begin{lma}\label{mk}
If $m\ge 2$ and $k\ge 2$, then $m2^k-m+1<(m+1)^k$.
\end{lma}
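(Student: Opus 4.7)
The plan is to prove the lemma by induction on $k \ge 2$, as the authors suggest in the statement.

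\textbf{Base case $k=2$.} The inequality reduces to $4m - m + 1 < (m+1)^2$, i.e.\ $3m + 1 < m^2 + 2m + 1$, i.e.\ $m < m^2$, which holds since $m \ge 2$.

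\textbf{Inductive step.} Assume $m 2^k - m + 1 < (m+1)^k$. I would rewrite the successor expression so the inductive hypothesis can be fed in directly:
\begin{equation*}
m 2^{k+1} - m + 1 = 2(m 2^k - m + 1) + (m-1).
\end{equation*}
Applying the inductive hypothesis (after multiplying it by $2$) gives
\begin{equation*}
m 2^{k+1} - m + 1 < 2(m+1)^k + (m-1).
\end{equation*}
So it suffices to show $2(m+1)^k + (m-1) \le (m+1)^{k+1}$. Rearranging, this is equivalent to $(m-1)\bigl[(m+1)^k - 1\bigr] \ge 0$, which holds for $m \ge 2$ and $k \ge 2$ since both factors are nonnegative. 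Chaining the two bounds completes the inductive step and preserves strict inequality.

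\textbf{Obstacle.} There is no real conceptual obstacle; the step is just careful algebra, with the one point requiring attention being to arrange the induction so that strictness is maintained when the hypothesis is multiplied by $2$. As a sanity check, one can also give a one-line non-inductive proof via the binomial theorem: expanding both sides and subtracting yields
\begin{equation*}
(m+1)^k - (m 2^k - m + 1) = \sum_{j=2}^{k} \binom{k}{j}(m^j - m),
\end{equation*}
and every summand is nonnegative with the $j=2$ term strictly positive when $m\ge 2$ and $k \ge 2$. Either route delivers the lemma cleanly.
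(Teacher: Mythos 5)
Your proof is correct, and it follows exactly the route the paper indicates: the paper gives no written proof, only the remark that the lemma ``can be proved by induction on $k$,'' which is precisely what you carry out (and your binomial-theorem identity is a valid, even cleaner, alternative). Both your base case and the rearrangement $(m+1)^{k+1}-2(m+1)^k-(m-1)=(m-1)\bigl[(m+1)^k-1\bigr]\ge 0$ check out.
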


Theorem~\ref{partial} together with Corollary~\ref{corpartial} answer the basic question as to when a partial word can avoid a given pattern.

\begin{thm}\label{partial}
Suppose that a pattern $p$ uses $r$ distinct variables, where the $j$th variable
occurs $k_j\ge 1$ times. Without loss of generality, let $k=|p|=k_1+\cdots+k_r$
and $1=k_1=\cdots=k_s<k_{s+1}\le\cdots\le k_r$. Then the mean number of occurrences
of $p$ in a partial word of length $n$ over an alphabet of $m$ letters is
$$\widehat{\Omega}_n\sim\frac{n^{s+1}}{(s+1)!}\prod_{j=s+1}^r
\frac{m2^{k_j}-m+1}{(m+1)^{k_j}-(m2^{k_j}-m+1)}.$$
\end{thm}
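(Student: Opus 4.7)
The plan is to parallel the proof of Theorem~\ref{full}, modifying the OGF construction to account for the extended alphabet $\mathcal A+\{\diamond\}$ and the compatibility relation. Since a partial word of length $n$ has $n$ independent positions over an alphabet of size $m+1$, the prefix and suffix contributions will be $\mathrm{SEQ}((m+1)\mathcal Z)$ with OGF $1/(1-(m+1)z)$, replacing the $1/(1-mz)$ of the full-word case.

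The key new ingredient is the variable-block contribution. Fix a variable $a_j$ with $k_j$ occurrences in $p$ and a chosen image length $\ell_j\ge 1$. Think of the $k_j\ell_j$ positions in the factor that come from $a_j$ as a $k_j\times\ell_j$ grid (rows $=$ copies, columns $=$ positions of $u_j$). For the factor to be compatible with $h(p)$ for some morphism, each column must be a $k_j$-tuple over $\mathcal A+\{\diamond\}$ whose non-$\diamond$ entries agree on a single common letter. I would count these admissible column configurations in two cases: the all-$\diamond$ column (one configuration) and columns realizing a fixed letter $v\in\mathcal A$ at a nonempty subset of the $k_j$ copies ($m(2^{k_j}-1)$ configurations), yielding $m\cdot 2^{k_j}-m+1$ configurations per column. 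With $\ell_j$ independent columns carrying length weight $z^{k_j\ell_j}$, summing over $\ell_j\ge 1$ gives the block generating function $(m2^{k_j}-m+1)z^{k_j}/(1-(m2^{k_j}-m+1)z^{k_j})$. Multiplying the prefix, the $r$ block factors, and the suffix yields
\[
\widehat O(z)=\frac{1}{(1-(m+1)z)^2}\prod_{j=1}^r\frac{(m2^{k_j}-m+1)z^{k_j}}{1-(m2^{k_j}-m+1)z^{k_j}}.
\]

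Next I would perform the singularity analysis. For each $k_j=1$ block (there are $s$ of these), the coefficient $m\cdot 2-m+1=m+1$ makes its pole coincide with the prefix/suffix pole at $z=1/(m+1)$, contributing one extra order to that pole. For each $k_j\ge 2$ block, the poles lie at the $k_j$-th roots of $1/(m2^{k_j}-m+1)$; by Lemma~\ref{mk}, $m2^{k_j}-m+1<(m+1)^{k_j}$, so their common modulus strictly exceeds $1/(m+1)$ and they do not contribute to the dominant behavior. Thus $z=1/(m+1)$ is the unique singularity of minimal modulus, with order $2+s$. Absorbing the $s$ copies of $\frac{(m+1)z}{1-(m+1)z}$ into the prefix/suffix gives the local form $\widehat O(z)\sim\frac{1}{(1-(m+1)z)^{2+s}}\prod_{j=s+1}^r\frac{m2^{k_j}-m+1}{(m+1)^{k_j}-(m2^{k_j}-m+1)}$ near $z=1/(m+1)$, and Theorem~\ref{asymptotic} delivers
\[
[z^n]\widehat O(z)\sim\binom{n+s+1}{s+1}(m+1)^n\prod_{j=s+1}^r\frac{m2^{k_j}-m+1}{(m+1)^{k_j}-(m2^{k_j}-m+1)}.
\]
Dividing by $(m+1)^n$ (the total number of partial words of length $n$) and using $\binom{n+s+1}{s+1}\sim n^{s+1}/(s+1)!$ yields the claimed asymptotic.

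The main obstacle is the combinatorial set-up of the variable-block contribution. It is tempting, by direct analogy with Theorem~\ref{full}, to count the $m^{\ell_j}$ choices of $u_j$ and then the $2^{k_j\ell_j}$ compatible factor completions independently, giving a block coefficient of $m\cdot 2^{k_j}$; but this over-counts because morphisms that differ only at positions where every one of the $k_j$ copies is $\diamond$ are indistinguishable from the partial word, and they should be collapsed into a single configuration. Performing this collapse correctly is exactly what produces the coefficient $m\cdot 2^{k_j}-m+1$ in place of $m\cdot 2^{k_j}$. Once this bookkeeping is in place, the rest of the argument follows the template of Theorem~\ref{full}, with Lemma~\ref{mk} playing the crucial role of keeping $z=1/(m+1)$ the dominant singularity despite the presence of the new block poles.
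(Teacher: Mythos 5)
Your proposal is correct and follows essentially the same route as the paper: the column-by-column decomposition of each variable block into the all-$\diamond$ column plus a letter placed at a nonempty subset of the $k_j$ copies (giving the coefficient $m2^{k_j}-m+1$) is exactly the paper's specification $\{\diamond^{k_j}\}+\mathcal A\circ[(\mathcal Z+\{\diamond\})^{k_j}\backslash\{\diamond^{k_j}\}]$, and the subsequent singularity analysis via Lemma~\ref{mk} and Theorem~\ref{asymptotic} matches the paper step for step.
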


\begin{proof}
For the {\em mean} number of occurrences of a pattern $p$ in
a partial word of length $n$, calculations similar to those
employed for the number of occurrences of a pattern $p$ in a full word
of length $n$ can be based on regular specifications.
Each occurrence of $p$ consists of a concatenation of nonempty full words repeated
$k_j$ times for the $j$th variable surrounded by arbitrary sequences of letters and hole characters, with the option of having some letters in the substituted
words be replaced by $\diamond$'s. When a letter in a word is replaced in
every instance by $\diamond$, that letter practically no longer exists, and we
treat it like $\diamond$. Thus all the occurrences of $p$ as a factor are
described by
$$\widehat{\mathcal O}=\text{SEQ}(\mathcal A+\{\diamond\})
\times\prod_{j=1}^{r}\text{SEQ}(\{\diamond^{k_j}\}
+\mathcal A\circ[(\mathcal Z+\{\diamond\})^{k_j}\backslash\{\diamond^{k_j}\}])
\backslash\{\varepsilon\}\times\text{SEQ}(\mathcal A+\{\diamond\}),$$ so we get
\begin{align*}
\widehat{O}(z)
&=\frac{1}{[1-(m+1)z]^2}
\prod_{j=1}^r\left(\frac{1}{1-z^{k_j}-m(2^{k_j}-1)z^{k_j}}-1\right) \\
&=\frac{(m+1)^sz^s}{[1-(m+1)z]^{2+s}}
\prod_{j=s+1}^r\frac{(m2^{k_j}-m+1)z^{k_j}}{1-(m2^{k_j}-m+1)z^{k_j}} \\
&=\frac{(m+1)^sz^k}{[1-(m+1)z]^{2+s}}
\prod_{j=s+1}^r\frac{m2^{k_j}-m+1}{1-(m2^{k_j}-m+1)z^{k_j}}.
\end{align*}
We have a pole of order $2+s$ at $z=1/(m+1)$, and poles at
the $k_j$ different $k_j$th roots of $1/(m2^{k_j}-m+1)$ for $k_j\ge 2$.
Those poles have modulus greater than $1/(m+1)$ by Lemma \ref{mk}.
The singularity at $z=1/(m+1)$ dominates
because it is closest to the origin, so by Theorem \ref{asymptotic},
\begin{align*}
\widehat{O}(z)
&\sim \frac{(m+1)^{s-k}}{[1-(m+1)z]^{2+s}}\prod_{j=s+1}^r
\frac{m2^{k_j}-m+1}{1-(m2^{k_j}-m+1)/(m+1)^{k_j}} \\
&= \frac{1}{[1-(m+1)z]^{2+s}}\prod_{j=s+1}^r
\frac{m2^{k_j}-m+1}{(m+1)^{k_j}-(m2^{k_j}-m+1)}.
\end{align*}
Taking the coefficient of $z^n$ in the Taylor expansion, we get
\begin{align*}
[z^n]\widehat{O}(z)
&\sim\binom{n+s+1}{s+1}(m+1)^n\prod_{j=s+1}^r
\frac{m2^{k_j}-m+1}{(m+1)^{k_j}-(m2^{k_j}-m+1)} \\
&\sim\frac{n^{s+1}(m+1)^n}{(s+1)!}\prod_{j=s+1}^r
\frac{m2^{k_j}-m+1}{(m+1)^{k_j}-(m2^{k_j}-m+1)}.
\end{align*}
Therefore, the mean number of occurrences of a pattern $p$
in a partial word of length $n$ over an alphabet of $m$ letters is 
$$\widehat{\Omega}_n
\sim\frac{n^{s+1}}{(s+1)!}\prod_{j=s+1}^r
\frac{m2^{k_j}-m+1}{(m+1)^{k_j}-(m2^{k_j}-m+1)}. 
$$
\end{proof}

To illustrate Theorem~\ref{partial}, consider the pattern $p=abacaba$, where $r=3$, $s=1$, $k_1=1$, $k_2=2$,
and $k_3=4$. Substituting these variables, $m=12$, and $n=100$, we find
that $$\widehat{\Omega}_{100}\approx 8.9384\cdots.$$

When $\widehat{\Omega}_n<1$, we may apply Theorem \ref{expectation},
so in that case we obtain the following corollary.

\begin{cor}\label{corpartial}
Suppose that a pattern $p$ uses $r$ distinct variables, where the $j$th variable
occurs $k_j\ge 1$ times. Without loss of generality, let $k=|p|=k_1+\cdots+k_r$
and $1=k_1=\cdots=k_s<k_{s+1}\le\cdots\le k_r$. If $$n<(1+o(1))\left[(s+1)!\prod_{j=s+1}^r
\left(\frac{(m+1)^{k_j}}{m2^{k_j}-m+1}-1\right)\right]^{\frac{1}{s+1}},$$
there is a partial word of length $n$ over an alphabet of $m$ letters
that avoids $p$.
\end{cor}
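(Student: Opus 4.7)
The plan is to derive this corollary as a direct probabilistic consequence of Theorem~\ref{partial} combined with the first-moment method formalized in Theorem~\ref{expectation}. The asymptotic mean occurrence count $\widehat{\Omega}_n$ has already been computed; what remains is to translate ``expected count less than one'' into ``some partial word has zero occurrences,'' and then invert the inequality $\widehat{\Omega}_n < 1$ to express the threshold on $n$ in the form stated.

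First I would set up the probability space explicitly. Let $\Omega$ be the finite set of all partial words of length $n$ over $\mathcal{A} + \{\diamond\}$, equipped with the uniform measure, and let $X:\Omega\rightarrow\mathbb{R}$ be the random variable counting the total number of occurrences of $p$ (in the sense defined in Section~2, where each factor compatible with $h_i(p)$ for a distinct nonerasing morphism $h_i$ is counted separately). By the definition of mean used in Theorem~\ref{partial}, we have $E[X] = \widehat{\Omega}_n$, and since $\Omega$ is finite, $E[X] < \infty$ is automatic so Theorem~\ref{expectation} applies.

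Next I would solve the inequality $\widehat{\Omega}_n < 1$ for $n$. Using the asymptotic formula from Theorem~\ref{partial},
\begin{equation*}
\widehat{\Omega}_n \sim \frac{n^{s+1}}{(s+1)!}\prod_{j=s+1}^r\frac{m2^{k_j}-m+1}{(m+1)^{k_j}-(m2^{k_j}-m+1)},
\end{equation*}
the condition $\widehat{\Omega}_n < 1$ is equivalent (up to a $(1+o(1))$ factor absorbed into the asymptotic) to
\begin{equation*}
n^{s+1} < (1+o(1))\,(s+1)!\prod_{j=s+1}^r\frac{(m+1)^{k_j}-(m2^{k_j}-m+1)}{m2^{k_j}-m+1} = (1+o(1))\,(s+1)!\prod_{j=s+1}^r\left(\frac{(m+1)^{k_j}}{m2^{k_j}-m+1}-1\right),
\end{equation*}
and extracting an $(s+1)$th root yields the hypothesis of the corollary.

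Finally, under that hypothesis, Theorem~\ref{expectation} guarantees some $w\in\Omega$ with $X(w)\le E[X]=\widehat{\Omega}_n < 1$. Since $X$ is integer-valued, $X(w)=0$, meaning $w$ is a partial word of length $n$ over an alphabet of $m$ letters that avoids $p$, completing the argument. There is really no main obstacle here: the analytic work was already absorbed in Theorem~\ref{partial}, and the only care needed is to verify that the $(1+o(1))$ factor arising from the asymptotic equivalence in Theorem~\ref{partial} is properly accounted for when inverting the inequality, which is standard.
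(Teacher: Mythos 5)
Your proposal is correct and follows exactly the route the paper intends: the paper's entire justification for Corollary~\ref{corpartial} is the one-line remark that when $\widehat{\Omega}_n<1$ one may apply Theorem~\ref{expectation} to the mean from Theorem~\ref{partial}, and your write-up simply makes explicit the uniform probability space, the integer-valuedness of the occurrence count, and the algebraic inversion $\frac{(m+1)^{k_j}-(m2^{k_j}-m+1)}{m2^{k_j}-m+1}=\frac{(m+1)^{k_j}}{m2^{k_j}-m+1}-1$. No gaps; this is the same argument, just spelled out.
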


\subsection{Mean over all stricly partial words of a given length}

When a partial word avoids a pattern, all of its completions also do,
so the above corollary is weaker than the corresponding
one for full words. What we would want to do is to require that the
partial words be strictly partial, i.e. have at least one hole. 
However, we get the same asymptotics.

\begin{thm}
\label{striclypartial}
Suppose that a pattern $p$ uses $r$ distinct variables, where the $j$th
variable occurs $k_j\ge 1$ times. Without loss of generality, let
$k=|p|=k_1+\cdots+k_r$ and $1=k_1=\cdots=k_s<k_{s+1}\le\cdots\le k_r$. The
mean number of occurrences of $p$ in a strictly partial word of length $n$ over an alphabet of $m$ letters is
$$\widehat{\Omega}_n\sim\frac{n^{s+1}}{(s+1)!}\prod_{j=s+1}^r
\frac{m2^{k_j}-m+1}{(m+1)^{k_j}-(m2^{k_j}-m+1)}.$$
\end{thm}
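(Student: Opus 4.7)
The plan is to exploit the fact that the strictly partial words of length $n$ are precisely the complement of the full words inside the set of all partial words of length $n$. By linearity of ``total number of pattern occurrences summed over a class of objects'', if $T_n^{\mathrm{part}}$, $T_n^{\mathrm{full}}$, and $T_n^{\mathrm{strict}}$ denote the total occurrence counts of $p$ in, respectively, all partial words, all full words, and all strictly partial words of length $n$ over an $m$-letter alphabet, then
\begin{equation*}
T_n^{\mathrm{strict}} = T_n^{\mathrm{part}} - T_n^{\mathrm{full}},
\end{equation*}
since a full word of length $n$ (viewed as a partial word with zero holes) admits a factor compatible with $h(p)$ if and only if it admits $h(p)$ as a factor in the full-word sense. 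Thus no recomputation of a generating function from scratch is needed; I can read off the two pieces from Theorems~\ref{partial} and \ref{full}.

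Next I would plug in the asymptotics. Theorem~\ref{partial} gives $T_n^{\mathrm{part}} \sim \frac{n^{s+1}(m+1)^n}{(s+1)!}\,C_p$ with $C_p=\prod_{j=s+1}^r \frac{m2^{k_j}-m+1}{(m+1)^{k_j}-(m2^{k_j}-m+1)}$, while Theorem~\ref{full} gives $T_n^{\mathrm{full}} \sim \frac{n^{s+1}m^n}{(s+1)!}\,C_f$ with $C_f = \prod_{j=s+1}^r \frac{1}{m^{k_j-1}-1}$. Because $m<m+1$, $T_n^{\mathrm{full}}$ is exponentially dominated by $T_n^{\mathrm{part}}$, and hence $T_n^{\mathrm{strict}} \sim T_n^{\mathrm{part}}$.

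Finally I would divide by the cardinality of the class of strictly partial words, namely $(m+1)^n - m^n$. Since $(m/(m+1))^n\to 0$, we have $(m+1)^n - m^n \sim (m+1)^n$, so
\begin{equation*}
\widehat{\Omega}_n
= \frac{T_n^{\mathrm{strict}}}{(m+1)^n - m^n}
\sim \frac{T_n^{\mathrm{part}}}{(m+1)^n}
\sim \frac{n^{s+1}}{(s+1)!}\prod_{j=s+1}^r\frac{m2^{k_j}-m+1}{(m+1)^{k_j}-(m2^{k_j}-m+1)},
\end{equation*}
which is the claimed asymptotic.

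There is no real technical obstacle here: the argument is a one-line subtraction at the level of totals followed by a one-line estimate. The only point requiring care is conceptual, namely, confirming that the full-word count $T_n^{\mathrm{full}}$ subtracted off is indeed the quantity whose asymptotics are delivered by Theorem~\ref{full}, which is immediate from the definition of compatibility in the hole-free case. Once that identification is made, the exponential gap between $m^n$ and $(m+1)^n$ does all the work, and the leading term survives unchanged.
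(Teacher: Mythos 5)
Your proposal is correct and follows essentially the same route as the paper's own proof: subtract the total occurrence count over full words (Theorem~\ref{full}) from that over all partial words (Theorem~\ref{partial}), divide by $(m+1)^n-m^n$, and observe that $(m/(m+1))^n\to 0$ kills both the subtracted term and the correction to the denominator. The only cosmetic difference is that you invoke the exponential domination before dividing rather than after, which changes nothing.
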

\begin{proof}
As we know from Theorem \ref{partial}, the total number of occurrences of a
pattern $p$ in partial words of length $n$ over an alphabet of $m$ letters is
$$\sim\frac{n^{s+1}(m+1)^n}{(s+1)!}\prod_{j=s+1}^r
\frac{m2^{k_j}-m+1}{(m+1)^{k_j}-(m2^{k_j}-m+1)}.$$
According to Theorem \ref{full}, the total number of occurrences of a pattern
$p$ in full words of length $n$ over an alphabet of $m$ letters is
$$\sim\frac{1}{\prod_{j=s+1}^r(m^{k_j-1}-1)}\frac{n^{s+1}m^n}{(s+1)!}.$$
Subtracting the two quantities, we get the total number of occurrences of $p$
in strictly partial words of length $n$ over an alphabet of $m$ letters:
$$\sim\frac{n^{s+1}}{(s+1)!}\left[(m+1)^n\prod_{j=s+1}^r
\frac{m2^{k_j}-m+1}{(m+1)^{k_j}-(m2^{k_j}-m+1)}
-\frac{m^n}{\prod_{j=s+1}^r(m^{k_j-1}-1)}\right].$$
Dividing by $(m+1)^n-m^n$, the total number of strictly partial words
of length $n$ over an alphabet of $m$ letters, we find that the
mean number of occurrences of $p$ in a strictly partial word of length $n$
over an alphabet of $m$ letters is
$$\sim\frac{n^{s+1}/(s+1)!}{1-[m/(m+1)]^n}\left\{\prod_{j=s+1}^r
\frac{m2^{k_j}-m+1}{(m+1)^{k_j}-(m2^{k_j}-m+1)}
-\frac{[m/(m+1)]^n}{\prod_{j=s+1}^r(m^{k_j-1}-1)}\right\}.$$
However, notice that
$$\lim_{n\rightarrow\infty}\left(\frac{m}{m+1}\right)^n=0.$$
We find that the asymptotics are equivalent to
$$\widehat{\Omega}_n\sim\frac{n^{s+1}}{(s+1)!}\prod_{j=s+1}^r
\frac{m2^{k_j}-m+1}{(m+1)^{k_j}-(m2^{k_j}-m+1)},$$
the mean number of occurrences of $p$ in a loosely partial word of length $n$
over an alphabet of $m$ letters. 
\end{proof}

To illustrate Theorem~\ref{striclypartial}, consider the pattern $p=abacaba$, where $r=3$, $s=1$, $k_1=1$, $k_2=2$,
and $k_3=4$. Substituting these variables, $m=12$, and $n=100$, we find
that $$\widehat{\Omega}_{100}\approx 8.9384\cdots.$$

When $\widehat{\Omega}_n<1$, we may apply Theorem \ref{expectation},
so in that case we obtain the following corollary.

\begin{cor}
Suppose that a pattern $p$ uses $r$ distinct variables, where the $j$th
variable occurs $k_j\ge 1$ times. Without loss of generality, let
$k=|p|=k_1+\cdots+k_r$ and $1=k_1=\cdots=k_s<k_{s+1}\le\cdots\le k_r$. 
If $$n<(1+o(1))\left[(s+1)!\prod_{j=s+1}^r
\left(\frac{(m+1)^{k_j}}{m2^{k_j}-m+1}-1\right)\right]^{\frac{1}{s+1}},$$
there is a strictly partial word of length
$n$ over an alphabet of $m$ letters that avoids $p$.
\end{cor}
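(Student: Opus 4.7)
The plan is to apply the probabilistic method (Theorem~\ref{expectation}) directly to the mean occurrence count calculated in Theorem~\ref{striclypartial}. I would equip the set of strictly partial words of length $n$ over the $m$-letter alphabet with the uniform probability measure, and let $X$ be the random variable that counts the number of occurrences of $p$ in a sampled word. Theorem~\ref{striclypartial} then yields
$$E[X] = \widehat{\Omega}_n \sim \frac{n^{s+1}}{(s+1)!}\prod_{j=s+1}^r \frac{m2^{k_j}-m+1}{(m+1)^{k_j}-(m2^{k_j}-m+1)}.$$
If this mean is strictly less than $1$, Theorem~\ref{expectation} provides some strictly partial word $w$ with $X(w)\le E[X] < 1$; since $X$ takes only nonnegative integer values, this forces $X(w)=0$, i.e., $w$ avoids $p$.

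It remains to convert the condition $\widehat{\Omega}_n<1$ into an explicit bound on $n$. Rearranging the asymptotic estimate above gives
$$n^{s+1} < (1+o(1))\,(s+1)! \prod_{j=s+1}^r \left(\frac{(m+1)^{k_j}}{m2^{k_j}-m+1} - 1\right),$$
and taking the $(s+1)$-th root yields the bound in the statement, the exponent $1/(s+1)$ preserving the $(1+o(1))$ prefactor since $(1+o(1))^{1/(s+1)}=1+o(1)$.

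There is essentially no obstacle here: the corollary is a mechanical repackaging of Theorem~\ref{striclypartial} through the first-moment method, mirroring exactly how Corollary~\ref{corpartial} follows from Theorem~\ref{partial}. In fact, the resulting bound is literally identical to that of Corollary~\ref{corpartial}, which reflects the phenomenon already noted in the proof of Theorem~\ref{striclypartial}: full words comprise a fraction $[m/(m+1)]^n$ of the sample space of all partial words, and this fraction vanishes exponentially in $n$, so restricting attention to partial words with at least one hole leaves the leading-order asymptotics of $\widehat{\Omega}_n$ unchanged.
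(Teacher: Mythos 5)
Your proposal is correct and matches the paper's (implicit) argument exactly: the paper derives this corollary by applying Theorem~\ref{expectation} to the mean from Theorem~\ref{striclypartial} whenever $\widehat{\Omega}_n<1$, and your rearrangement of that inequality into the stated bound on $n$ is the intended computation. Your closing observation about why the bound coincides with that of Corollary~\ref{corpartial} is also consistent with the remark made in the proof of Theorem~\ref{striclypartial}.
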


\subsection{Mean over all partial words of a given length with a given hole density}

For all terms and notations not defined here, we refer the reader to the book of Pemantle and Wilson \cite{pemantle}.

Lemma~\ref{amoebas} will help us compare the distances of poles of
the bivariate version of the generating function from the origin and establish one of them
as the dominant singularity. 



\begin{lma}\label{amoebas}
Let $H_1=1-(m+u)z$, and more generally let $H_j=1-[m(1+u)^j-mu^j+u^j]z^j$ for
integers $j\ge 2$. Let $B_1$ be the component of
$\mathbb R^2\backslash\texttt{amoeba}(H_1)$ containing a ray
$(-\infty,b]\cdot(1,1)$, and more generally let $B_j$ be the component of
$\mathbb R^2\backslash\texttt{amoeba}(H_j)$ containing a ray
$(-\infty,b]\cdot(1,1)$ for integers $j\ge 2$. Then
$$\partial B_1=\{(-\log(m+u),\log u):u\in(0,\infty)\},$$ $$\partial B_j
=\left\{\left(-\frac{1}{j}\log[m(1+u)^j-(m-1)u^j],\log u\right):u\in(0,\infty)
\right\}$$ for $j\ge 2$, and $B_1\subset B_j$ for all $j\ge 2$.
\end{lma}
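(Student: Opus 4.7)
The plan has two parts: compute $\partial B_j$ by locating the leftmost extent of the amoeba on each horizontal slice, and then reduce the inclusion $B_1\subset B_j$ to a binomial inequality.

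For the boundary parametrizations, I would fix $y=\log r$ and study the slice of $\texttt{amoeba}(H_j)$ at height $y$. The zero set of $H_1$ is given by $z=1/(m+u)$, so on the circle $|u|=r$ the triangle inequality gives $|m+u|\le m+r$ with equality exactly at $u=r$; the leftmost point of the slice is therefore $x=-\log(m+r)$. For $j\ge 2$, I would first expand
\begin{equation*}
m(1+u)^j-(m-1)u^j \;=\; m\sum_{k=0}^{j-1}\binom{j}{k}u^k + u^j,
\end{equation*}
which has strictly positive coefficients. Applying the triangle inequality on $|u|=r$ then identifies $u=r$ as the point where $|m(1+u)^j-(m-1)u^j|$ attains its maximum; combined with $|z|^j=|m(1+u)^j-(m-1)u^j|^{-1}$ on the zero set, this gives $x=-\frac{1}{j}\log[m(1+r)^j-(m-1)r^j]$ as the leftmost extent of the slice. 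Since each circle $|u|=r$ is connected, every amoeba slice is a connected subset of $\mathbb R$, so the component structure of the complement is straightforward; the diagonal ray $(-\infty,b]\cdot(1,1)$ runs off to $(-\infty,-\infty)$ and so lies in the left half-plane cut out by this leftmost extent. This identifies $B_1=\{(x,y):x<-\log(m+e^y)\}$ and $B_j=\{(x,y):x<-\frac{1}{j}\log[m(1+e^y)^j-(m-1)e^{jy}]\}$ for $j\ge 2$, from which the stated parametrizations of $\partial B_1$ and $\partial B_j$ follow by substituting $u=e^y$.

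For the inclusion $B_1\subset B_j$, the descriptions above reduce the claim to the pointwise inequality
\begin{equation*}
(m+u)^j + (m-1)u^j \;\ge\; m(1+u)^j \qquad \text{for all } u>0.
\end{equation*}
I would expand both sides by the binomial theorem and subtract, obtaining $(m+u)^j-m(1+u)^j=\sum_{k=0}^{j}\binom{j}{k}(m^{j-k}-m)u^k$. The $k=j$ coefficient $1-m$ is exactly cancelled by the extra $+(m-1)u^j$, and for $0\le k\le j-1$ each coefficient $m^{j-k}-m$ is nonnegative, with strict positivity at $k=0$ (since $m^j>m$ whenever $m\ge 2$ and $j\ge 2$). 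Hence the inequality is strict for $u>0$, and $B_1\subset B_j$ follows.

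The step I expect to be the main obstacle is the geometric one: rigorously confirming that the distinguished component $B_j$ really coincides with the half-plane described, rather than some more intricate region. The necessary inputs are connectedness of each horizontal amoeba slice (from continuity of the defining polynomial on $|u|=r$) and the fact that the tail of the diagonal ray $(t,t)$ with $t\to-\infty$ escapes to the far left faster than the boundary curve, which tends to the finite constant $-\log m$ in the limit $r\to 0^+$. Once these are in place, the binomial inequality yields the inclusion directly.
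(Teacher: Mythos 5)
Your proposal is correct and follows essentially the same route as the paper: both obtain the boundary parametrizations from the fact that the defining polynomials have positive coefficients (so the modulus is maximized on the positive real axis of each circle $|u|=r$), and both reduce $B_1\subset B_j$ to the pointwise inequality $(m+u)^j+(m-1)u^j>m(1+u)^j$ for $u>0$. The only difference is that you verify this inequality by binomial expansion and termwise comparison, whereas the paper invokes the power-mean inequality for the $m$ values $m+u,u,\ldots,u$; both are valid and equally short.
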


\begin{proof}
First, note that $$\texttt{amoeba}(H_1)=\{(-\log|m+u|,\log|u|):u\in\mathbb C\}$$
and $$\texttt{amoeba}(H_j)
=\left\{\left(-\frac{1}{j}\log|m(1+u)^j-(m-1)u^j|,\log|u|\right)
:u\in\mathbb C\right\}$$
for integers $j \ge 2$.
Since the polynomial $m+u$ has all positive coefficients,
$$\log|m+u|\le\log(m+|u|)$$ and
$$\partial B_1=\{(-\log(m+u),\log u):u\in(0,\infty)\}.$$
More generally, the polynomial $m(1+u)^j-(m-1)u^j$ has all positive
coefficients, so $\log|m(1+u)^j-(m-1)u^j|\le\log[m(1+|u|)^j-(m-1)|u|^j]$ and
$$\partial B_j
=\left\{\left(-\frac{1}{j}\log[m(1+u)^j-(m-1)u^j],\log u\right):u\in(0,\infty)\right\}$$
for $j \ge 2$.
For any parameter $u\in(0,\infty)$, the corresponding points on $\partial B_1$
and $\partial B_j$ lie on the same horizontal line. However, the power means
inequality gives us
\begin{center}
$\sqrt[j]{\frac{(m+u)^j+(m-1)u^j}{m}} > \frac{(m+u)+(m-1)u}{m}=1+u$
\end{center}
which implies $$-\log(m+u) < -\frac{1}{j}\log[m(1+u)^j-(m-1)u^j],$$
so $\partial B_1$ lies strictly to the left of $\partial B_j$ and
$B_1\subset B_j$ for all $j\ge 2$. 
\end{proof}

We now calculate the mean number of occurrences
of a pattern in a partial word with a given length and hole density.
This requires the construction of a bivariate generating function, where
a second variable, $u$, marks the number of $\diamond$'s in a partial word.

\begin{thm}
\label{partialdensity}
Suppose that a pattern $p$ uses $r$ distinct variables, where the $j$th variable
occurs $k_j\ge 1$ times. Without loss of generality, let $k=|p|=k_1+\cdots+k_r$
and $1=k_1=\cdots=k_s<k_{s+1}\le\cdots\le k_r$. Then the mean number of occurrences
of $p$ in a partial word of length $n$ with hole density
$d\in\mathbb Q\cap(0,1)$ over an alphabet of $m$ letters is
$$\widehat{\Omega}_{n,d}\sim\frac{n^{s+1}}{(s+1)!}
\prod_{j=s+1}^r\frac{[1+d(m-1)]^{k_j}-\left(1-\frac1m\right)(md)^{k_j}}
{m^{k_j-1}-[1+d(m-1)]^{k_j}+\left(1-\frac1m\right)(md)^{k_j}}.$$
\end{thm}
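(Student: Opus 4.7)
The plan is to mimic the proof of Theorem~\ref{partial}, replacing the univariate OGF by a bivariate one $\widehat O(z,u)$ in which a new variable $u$ marks hole characters: every free $\diamond$ in the outer sequences carries a factor of $u$, and inside a substituted block of length $k_j$ each non-all-hole string in $\{\mathcal Z,\diamond\}^{k_j}$ is weighted by $u$ raised to its number of holes, so the block contributes $(1+u)^{k_j}-u^{k_j}$ rather than the cardinality $2^{k_j}-1$. Setting $T_j(u):=m(1+u)^{k_j}-(m-1)u^{k_j}$ collapses the specification to
\begin{equation*}
\widehat O(z,u)=\frac{(m+u)^sz^s}{[1-(m+u)z]^{2+s}}\prod_{j=s+1}^r\frac{T_j(u)z^{k_j}}{1-T_j(u)z^{k_j}},
\end{equation*}
whose denominator factorises as $H_1^{2+s}\prod_{j=s+1}^r H_{k_j}$ in the notation of Lemma~\ref{amoebas}.

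I would then extract $[z^n]$ first, treating $u>0$ as a parameter. Lemma~\ref{amoebas} shows that for every positive real $u$ the order-$(2+s)$ pole at $z=1/(m+u)$ is strictly closer to the origin than any pole coming from the $H_{k_j}$, so Theorem~\ref{asymptotic} in $z$ yields
\begin{equation*}
[z^n]\widehat O(z,u)\sim\binom{n+s+1}{s+1}(m+u)^n G(u),\qquad G(u):=\prod_{j=s+1}^r\frac{T_j(u)}{(m+u)^{k_j}-T_j(u)}.
\end{equation*}
The same lemma, via the strict power-mean inequality, gives $(m+u)^{k_j}>T_j(u)$ for all real $u>0$, and the strict containment $\partial B_1\subset B_j$ rules out coincidence of the singular varieties of $H_1$ and $H_{k_j}$ on the circle $|u|=u_0:=dm/(1-d)$, so $G$ is analytic on a neighborhood of that circle.

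Next comes the extraction of $[u^{dn}]$. Expanding $G(u)=\sum_{k\ge 0}G_ku^k$ and using $[u^h](m+u)^n=\binom{n}{h}m^{n-h}$,
\begin{equation*}
[u^{dn}](m+u)^nG(u)=\binom{n}{dn}m^{n-dn}\sum_{k\ge 0}G_km^k\frac{\binom{n}{dn-k}}{\binom{n}{dn}}.
\end{equation*}
For each fixed $k$, $\binom{n}{dn-k}/\binom{n}{dn}\to(d/(1-d))^k$, and the geometric decay of $|G_k|$ (provided by the analyticity of $G$ on a disc strictly containing $|u|=u_0$) supplies a dominating bound that is uniform in $n$. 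The sum therefore converges to $G(u_0)$, so
\begin{equation*}
[z^nu^{dn}]\widehat O(z,u)\sim\frac{n^{s+1}}{(s+1)!}\binom{n}{dn}m^{n-dn}G(u_0).
\end{equation*}

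Finally, dividing by $\binom{n}{dn}m^{n-dn}$, the total number of length-$n$ partial words with exactly $dn$ holes over an $m$-letter alphabet, produces $\widehat\Omega_{n,d}\sim\frac{n^{s+1}}{(s+1)!}G(u_0)$. The identities $m+u_0=m/(1-d)$, $1+u_0=(1+d(m-1))/(1-d)$, and $u_0=dm/(1-d)$ then collapse $G(u_0)$ into the announced product after cancelling $(1-d)^{k_j}$ from every factor. I expect the main obstacle to be the dominated-convergence step inside the saddle-point extraction, since one must quantify the analyticity radius of $G$ past $u_0$ tightly enough to control $\sum_{k\ge K}G_km^k\binom{n}{dn-k}/\binom{n}{dn}$ uniformly in $n$; this ultimately rests on the strictness of the inclusion $B_1\subset B_j$ supplied by Lemma~\ref{amoebas}.
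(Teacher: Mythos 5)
Your route is genuinely different from the paper's: instead of running the Pemantle--Wilson multivariate machinery (critical point equations, Hessian, quasi-local cycles) on the singular variety of $H$, you iterate two univariate extractions, and you do land on the same saddle $u_0=dm/(1-d)=u_*$ and the same closed form for $G(u_0)$. But the step you yourself flag as the main obstacle is a genuine gap, and Lemma~\ref{amoebas} does not close it. Writing $T_j(u)=m(1+u)^{k_j}-(m-1)u^{k_j}$, the factor $T_j(u)/[(m+u)^{k_j}-T_j(u)]$ of $G$ has poles at the roots of the ``resonance'' polynomial $(m+u)^{k_j}-T_j(u)$, which has degree $k_j-2$ (the top two coefficients cancel) and strictly positive lower coefficients, hence no positive real roots --- but it can perfectly well have roots of modulus less than $u_0$. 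Concretely, for $k_j=3$ one computes $(m+u)^3-T_3(u)=3m(m-1)u+(m^3-m)$, with root $u=-(m+1)/3$; for $m=2$ this is $u=-1$, which lies in $|u|\le u_0$ as soon as $d\ge 1/3$. In that regime $G$ is not analytic on any disc containing $|u|=u_0$, the coefficients $G_k$ do not decay geometrically relative to $u_0^{-k}$, and your series $\sum_k G_k m^k\bigl(\tfrac{d}{1-d}\bigr)^k=\sum_k G_k u_0^k$ diverges, so the dominated-convergence argument collapses. Lemma~\ref{amoebas} cannot rescue this: the containment $B_1\subset B_j$ and the boundary parametrizations concern only \emph{positive real} $u$ (a point of $\mathcal V_1$ has log-modulus on $\partial B_1$ only when $u\in(0,\infty)$), so the lemma says nothing about coincidences or near-coincidences of $\mathcal V_1$ and $\mathcal V_{k_j}$ at negative or complex $u$ of modulus at most $u_0$.

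The conclusion $[u^{dn}](m+u)^nG(u)\sim\binom{n}{dn}m^{n-dn}G(u_0)$ is still true, but establishing it requires a genuine contour/saddle-point argument on $|u|=u_0$, deforming around the interior poles of $G$ (whose residue contributions, of size roughly $|m+u_r|^n|u_r|^{-dn}$, are exponentially dominated by $(m+u_0)^nu_0^{-dn}$), together with uniform-in-$u$ control over the whole circle of the error in your first extraction $[z^n]\widehat O(z,u)\sim\binom{n+s+1}{s+1}(m+u)^nG(u)$; note that trivial bounds on discarded terms already cost a factor of $\sqrt n$ against the saddle-point concentration, absorbed only because the main term carries $n^{s+1}$ versus $n^{s}$. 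This is precisely the bookkeeping the paper's multivariate smooth-point analysis performs: it localizes the two-dimensional Cauchy integral at the single critical point $(z_*,u_*)=\bigl(\tfrac{n-h}{mn},\tfrac{hm}{n-h}\bigr)$ on the stratum $S_1$ and needs only local data there, never global analyticity of $G$ on a disc. As written, your proof is incomplete at exactly the step that determines the asymptotic constant.
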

\begin{proof}
Marking each $\diamond$ with the variable $u$,
$$\widehat{\mathcal O}=\text{SEQ}(\mathcal A+\{\diamond\})
\times\prod_{j=1}^{r}\text{SEQ}(\{\diamond^{k_j}\}
+\mathcal A\circ[(\mathcal Z+\{\diamond\})^{k_j}\backslash\{\diamond^{k_j}\}])
\backslash\{\varepsilon\}\times\text{SEQ}(\mathcal A+\{\diamond\}),$$
becomes
\begin{align*}
\widehat{O}(z,u)
&=\frac{1}{(1-mz-uz)^2}\prod_{j=1}^r\left(
\frac{1}{1-u^{k_j}z^{k_j}-m[(1+u)^{k_j}-u^{k_j}]z^{k_j}}-1\right) \\
&=\frac{(m+u)^sz^k}{[1-(m+u)z]^{2+s}}
\prod_{j=s+1}^r\frac{m(1+u)^{k_j}-mu^{k_j}+u^{k_j}}
{1-[m(1+u)^{k_j}-mu^{k_j}+u^{k_j}]z^{k_j}}.
\end{align*}
For convenience, write
$G=(m+u)^sz^k\!\displaystyle\prod_{j=s+1}^r\![m(1+u)^{k_j}-mu^{k_j}+u^{k_j}]$
and $H=H_1^{2+s}\!\displaystyle\prod_{j=s+1}^r\!H_{k_j}$, where
$H_1=1-(m+u)z$ and $H_j=1-[m(1+u)^j-mu^j+u^j]z^j$, so that $\widehat{O}(z,u)=\dfrac{G}{H}$. Set $F=\dfrac{G}{H}$.

Note that $F$ is singular where $H$ is zero, i.e. on the singular variety
$$\mathcal V:=\mathcal V_H
=\left\{(z,u)\in\mathbb C^2:H_1^{2+s}\!\displaystyle\prod_{j=s+1}^r\!H_{k_j}
=0\right\}
=\mathcal V_1\cup\left(\bigcup_{j=s+1}^r\mathcal V_{k_j}\right)
$$
where we define $\mathcal V_1:=\{(z,u)\in\mathbb C^2:1-(m+u)z=0\}$
and $\mathcal V_j:=\{(z,u)\in\mathbb C^2:1-[m(1+u)^j-mu^j+u^j]z^j=0\}$.
Taking the log-modulus gives us the associated amoebas,
$\texttt{amoeba}(H):=\texttt{amoeba}(H_1)\cup\left(\bigcup_{j=s+1}^r
\texttt{amoeba}(H_{k_j})\right)$, where
$$\texttt{amoeba}(H_1)=\{(-\log|m+u|,\log|u|):u\in\mathbb C\}$$
and $$\texttt{amoeba}(H_j)
=\left\{\left(-\frac{1}{j}\log|m(1+u)^j-(m-1)u^j|,\log|u|\right)
:u\in\mathbb C\right\}.$$
Let $B$ be the component of $\mathbb R^2\backslash\texttt{amoeba}(H)$
containing a ray $(-\infty,b]\cdot(1,1)$. By Lemma \ref{amoebas}
we know that $B=B_1$, where $B_1$ is the component of
$\mathbb R^2\backslash\texttt{amoeba}(H_1)$ containing a ray
$(-\infty,b]\cdot(1,1)$. The strata are
$$S_1=\mathcal V_1\backslash\bigcup_{j=s+1}^r\mathcal V_{k_j},$$
$$S_{k_j}=\mathcal V_{k_j}\backslash\bigcup_{k_i\ne k_j}\mathcal V_{k_i},$$
and intersections of $\mathcal V_{k_i}$ and $\mathcal V_{k_j}$.
By Lemma \ref{amoebas}, only the critical points of $S_1$ may have log-moduli on
$$\partial B=\partial B_1=\{(-\log(m+u),\log u):u\in(0,\infty)\}.$$
The critical point on $S_1$ is described
by the critical point equations:
\begin{align*}
1-(m+u)z &= 0 \\
-hz(m+u) &= -nuz.
\end{align*}
The solution to the above system of equations is
$(z_*,u_*)=\left(\dfrac{n-h}{mn},\dfrac{hm}{n-h}\right)$, and its log-modulus
lies on $\partial B$. Since
$\mathbf x_{\min}=\mathrm{Re}\log\left(\dfrac{n-h}{mn},\dfrac{hm}{n-h}\right)$
is the unique minimizer in $\partial B$ for
$h=h_{\hat{\mathbf r}}$, i.e. $\mathbf x_{\min}$ minimizes
$-\hat{\mathbf r}\cdot\mathbf x$, and the singleton set containing
the critical point
$E=\left\{\left(\dfrac{n-h}{mn},\dfrac{hm}{n-h}\right)\right\}
\subseteq\mathbf T(\mathbf x_{\min})$ is a finite nonempty set of
quadratically nondegenerate smooth points, the intersection cycle
$$\sigma=\left[\sum_{\mathbf z\in W}\mathcal C(\mathbf z)\right]$$
is the sum of quasi-local cycles $\mathcal C(\mathbf z)$ for $\mathbf z\in E$,
where $\mathcal C(\mathbf z)$ is a homology generator of
$$(\mathcal V^{h(\mathbf x_{\min})+\varepsilon},
\mathcal V^{h(\mathbf x_{\min})-\varepsilon}),$$ for example the descending
submanifold.

We get asymptotics, so $[z^nu^h]\widehat{O}(z,u)$
\begin{align*}
\sim&(2\pi)^{\frac{1-2}{2}}\binom{-h}{s+1}(\det\mathcal H_1)^{-1/2}\cdot \frac{\left[G\middle/\displaystyle\prod_{j=s+1}^rH_{k_j}\right]
_{(z,u)=\left(\frac{n-h}{mn},\frac{hm}{n-h}\right)}}
{\left(\dfrac{hm}{n-h}\right)^{s+2}
\left(-\dfrac{n-h}{mn}\right)^{s+2}}\,h^{\frac{1-2}{2}}
\left(\frac{n-h}{mn}\right)^{-n}\left(\dfrac{hm}{n-h}\right)^{-h}\\
\sim&\frac{1}{\sqrt{2\pi}}\dfrac{(-h)^{s+1}}{(s+1)!}\frac{-h}{\sqrt{n(n-h)}}\cdot \frac{\left[G\middle/\displaystyle\prod_{j=s+1}^rH_{k_j}\right]
_{(z,u)=\left(\frac{n-h}{mn},\frac{hm}{n-h}\right)}}
{\left(-\frac{h}{n}\right)^{s+2}}\,{\frac{m^{n-h}}{\sqrt h}\left(1-\frac{h}{n}\right)^{h-n}
\left(\frac{n}{h}\right)^h},
\end{align*}
where
\begin{align*}
\det\mathcal H_1
&=\frac{Q}{(-uH_{1u})^3} \\
&=\frac{-u^2z^2z(-m-u)-u(-z)z^2(-m-u)^2-z^2u^2(-2)(-m-u)(-z)(-1)}{[(-u)(-z)]^3}\\
&=\frac{n(n-h)}{h^2} \text{ at } u = \frac{hm}{n-h},
\end{align*}
and where $\left[G\middle/\displaystyle\prod_{j=s+1}^rH_{k_j}\right]
_{(z,u)=\left(\frac{n-h}{mn},\frac{hm}{n-h}\right)}$
\begin{align*}
&=\left[\left(\frac{mn}{n-h}\right)^s
\left(\frac{n-h}{mn}\right)^k\right]\cdot \prod_{j=s+1}^r\frac{m\left(\dfrac{n-h+hm}{n-h}\right)^{k_j}
-(m-1)\left(\dfrac{hm}{n-h}\right)^{k_j}}
{1-\left[m\left(\dfrac{n-h+hm}{n-h}\right)^{k_j}
-(m-1)\left(\dfrac{hm}{n-h}\right)^{k_j}\right]
\left(\dfrac{n-h}{mn}\right)^{k_j}} \\
&=\prod_{j=s+1}^r\frac{m(n-h+hm)^{k_j}-(m-1)(hm)^{k_j}}
{(mn)^{k_j}-m(n-h+hm)^{k_j}+(m-1)(hm)^{k_j}}.
\end{align*}

Substituting the latter quantity, we get
\begin{align*}
&[z^nu^h]\widehat{O}(z,u)
\sim\frac{1}{\sqrt{2\pi}}\frac{(-h)^{s+1}}{(s+1)!}\frac{-h}{\sqrt{n(n-h)}}
\left(-\frac{n}{h}\right)^{s+2}
\frac{m^{n-h}}{\sqrt h}\left(1-\frac{h}{n}\right)^{h-n}\frac{n^h}{h^h}\cdot \\
&\prod_{j=s+1}^r\frac{m(n-h+hm)^{k_j}-(m-1)(hm)^{k_j}}
{(mn)^{k_j}-m(n-h+hm)^{k_j}+(m-1)(hm)^{k_j}} \\
&=\frac{m^{n-h}n^{s+1}(1-d)^{n(d-1)-\frac12}}
{(s+1)!\sqrt{2\pi nd}\,d^{nd}}
\prod_{j=s+1}^r\frac{[1+d(m-1)]^{k_j}-(1-1/m)(md)^{k_j}}
{m^{k_j-1}-[1+d(m-1)]^{k_j}+(1-1/m)(md)^{k_j}},
\end{align*}
where we let $d=h/n$ denote the density of holes.

Since the total number of partial words of length $n$ with $h$ holes over an alphabet of $m$ letters is,
by Stirling's approximation,
\begin{align*}
\binom{n}{h}m^{n-h}
&\sim m^{n-h}\left(\frac{n}{h}\right)^h\left(\frac{n}{n-h}\right)^{n-h}
\sqrt{\frac{2\pi n}{(2\pi h)2\pi(n-h)}} \\
&=\frac{m^{n-h}}{\sqrt{2\pi nd(1-d)}\,[d^d(1-d)^{1-d}]^n},
\end{align*}
we find that the mean number of occurrences of $p$ in a partial word
of length $n$ with hole density $d$ over an alphabet of $m$ letters is
$$\widehat{\Omega}_{n,d}\sim\frac{n^{s+1}}{(s+1)!}
\prod_{j=s+1}^r\frac{[1+d(m-1)]^{k_j}-\left(1-\frac1m\right)(md)^{k_j}}
{m^{k_j-1}-[1+d(m-1)]^{k_j}+\left(1-\frac1m\right)(md)^{k_j}}. 
$$
\end{proof}

To illustrate Theorem~\ref{partialdensity}, consider the pattern $p=abacaba$, where $r=3$, $s=1$, $k_1=1$, $k_2=2$,
and $k_3=4$. Substituting these variables, $m=12$, $n=100$, and $d=1/10$ we find
that $$\widehat{\Omega}_{100,1/10}\approx 17.788\cdots.$$

When $\widehat{\Omega}_{n,d}<1$, we may apply Theorem \ref{expectation}, so in
that case we obtain the following corollary.

\begin{cor}
Suppose that a pattern $p$ uses $r$ distinct variables, where the $j$th variable
occurs $k_j\ge 1$ times. Without loss of generality, let $k=|p|=k_1+\cdots+k_r$
and $1=k_1=\cdots=k_s<k_{s+1}\le\cdots\le k_r$.
If $$n<(1+o(1))\left[(s+1)!
\prod_{j=s+1}^r\left(\frac{m^{k_j-1}}
{[1+d(m-1)]^{k_j}-\left(1-\frac1m\right)(md)^{k_j}}-1\right)
\right]^{\frac{1}{s+1}}$$
there is a partial word of length $n$ with hole density $d$ over an
alphabet of $m$ letters that avoids $p$.
\end{cor}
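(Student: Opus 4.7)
The plan is to apply the probabilistic method in essentially the same way as the earlier corollaries in the paper: combine the asymptotic mean from Theorem~\ref{partialdensity} with the expectation bound of Theorem~\ref{expectation} to force the existence of a partial word with zero occurrences of $p$. Concretely, I would take the probability space $\Omega$ to consist of all partial words of length $n$ over $\mathcal{A}$ containing exactly $h = dn$ holes (where $d \in \mathbb{Q} \cap (0,1)$ is chosen so that $dn \in \mathbb{N}$), equipped with the uniform measure, and let $X:\Omega\to\mathbb{R}$ be the random variable counting occurrences of $p$ (counted with the multiplicity convention from Section~2). Then $E[X] = \widehat{\Omega}_{n,d}$, which is exactly the quantity computed in Theorem~\ref{partialdensity}.

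The next step is purely algebraic: invert the hypothesis $\widehat{\Omega}_{n,d} < 1$ to solve for $n$. Using the asymptotic formula
$$\widehat{\Omega}_{n,d} \sim \frac{n^{s+1}}{(s+1)!}\prod_{j=s+1}^r\frac{[1+d(m-1)]^{k_j}-\left(1-\tfrac1m\right)(md)^{k_j}}{m^{k_j-1}-[1+d(m-1)]^{k_j}+\left(1-\tfrac1m\right)(md)^{k_j}},$$
the condition $\widehat{\Omega}_{n,d} < 1$ rearranges, after multiplying through by $(s+1)!$ and inverting the product of ratios, to
$$n^{s+1} < (1+o(1))\,(s+1)!\prod_{j=s+1}^r\frac{m^{k_j-1}-[1+d(m-1)]^{k_j}+\left(1-\tfrac1m\right)(md)^{k_j}}{[1+d(m-1)]^{k_j}-\left(1-\tfrac1m\right)(md)^{k_j}},$$
and the factor inside the product simplifies to $\dfrac{m^{k_j-1}}{[1+d(m-1)]^{k_j}-(1-1/m)(md)^{k_j}} - 1$, giving exactly the bound in the statement.

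Once $\widehat{\Omega}_{n,d} < 1$ is established, Theorem~\ref{expectation} produces some $w \in \Omega$ with $X(w) \le E[X] < 1$. Since $X$ takes values in $\mathbb{Z}_{\ge 0}$, we must have $X(w) = 0$, which is precisely the statement that $w$ avoids $p$. This $w$ is the desired partial word of length $n$ with hole density $d$ over an $m$-letter alphabet.

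There is no real obstacle here; all the substantive analytic work lives in Theorem~\ref{partialdensity}, whose proof used the multivariate asymptotic machinery of Pemantle--Wilson together with Lemma~\ref{amoebas} to isolate the dominant smooth critical point of $\widehat{O}(z,u)$. The only mildly delicate point in the present argument is ensuring that the denominator $m^{k_j-1}-[1+d(m-1)]^{k_j}+(1-1/m)(md)^{k_j}$ is positive for $j \ge s+1$, so that the reciprocal inequality is meaningful and the corollary actually asserts a nontrivial range of $n$; this positivity is guaranteed by the same estimate that identified $z=1/(m+u)$ as the dominant singularity in the proof of Theorem~\ref{partialdensity}, since a nonpositive denominator would contradict $\widehat{\Omega}_{n,d}$ being a nonnegative quantity with polynomial growth in $n$.
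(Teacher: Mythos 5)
Your proposal is correct and follows exactly the paper's route: the corollary is obtained by combining the asymptotic mean $\widehat{\Omega}_{n,d}$ from Theorem~\ref{partialdensity} with the first-moment bound of Theorem~\ref{expectation}, and your algebraic inversion of $\widehat{\Omega}_{n,d}<1$ (using $\frac{A-B}{B}=\frac{A}{B}-1$) reproduces the stated bound on $n$. The remark on positivity of the denominator is a sensible extra check that the paper leaves implicit.
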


The upper bound for $L(m,Z_i)$ in Theorem~\ref{upperboundzimin} also applies to $L_d(m,Z_i)$. For a lower bound, we get the following.

\begin{cor}
$$L_d(m,Z_i)
\ge(1+o(1))\sqrt{2\prod_{j=1}^{i-1}\left(\frac{m^{2^j-1}}
{[1+d(m-1)]^{2^j}-\left(1-\frac1m\right)(md)^{2^j}}-1\right)}.$$
\end{cor}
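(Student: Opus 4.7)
The plan is to specialize the preceding corollary (which gives a length threshold below which a partial word with hole density $d$ over an $m$-letter alphabet avoiding a given pattern $p$ must exist) to the specific case $p = Z_i$, the $i$th Zimin pattern defined in Equation~(\ref{eq21}).

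First, I would read off the structural parameters of $Z_i$ in the notation of the corollary. The Zimin pattern $Z_i$ uses $r = i$ distinct variables $a_1,\ldots,a_i$, and a straightforward induction on the recursion $Z_i = Z_{i-1}\, a_i\, Z_{i-1}$ shows that each occurrence of $a_j$ in $Z_{i-1}$ gets duplicated when forming $Z_i$, while $a_i$ appears exactly once. Hence $a_i$ is the unique variable occurring a single time, so $s = 1$, and the remaining multiplicities $k_{s+1},\ldots,k_r$ are precisely $2^1, 2^2, \ldots, 2^{i-1}$ (in some order).

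Next, I would substitute $s = 1$ into the bound from the preceding corollary. Then $(s+1)! = 2$ and $\tfrac{1}{s+1} = \tfrac{1}{2}$, and after reindexing the product using $k_j = 2^j$, the criterion becomes
$$n < (1+o(1)) \sqrt{2 \prod_{j=1}^{i-1}\left(\frac{m^{2^j-1}}{[1+d(m-1)]^{2^j} - \left(1 - \frac{1}{m}\right)(md)^{2^j}} - 1\right)}.$$
By the preceding corollary, for any such $n$ there exists a partial word of length $n$ with hole density $d$ over an $m$-letter alphabet that avoids $Z_i$. By the definition of $L_d(m, Z_i)$ as the minimal length that forces an occurrence of $Z_i$, this immediately yields the stated lower bound.

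There is no genuine obstacle here; the only step needing any care is the combinatorial bookkeeping that confirms the multiset of variable multiplicities in $Z_i$ is $\{1, 2, 4, \ldots, 2^{i-1}\}$, which follows from the doubling in $Z_i = Z_{i-1}\, a_i\, Z_{i-1}$ together with the base case $Z_1 = a_1$.
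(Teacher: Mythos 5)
Your proof is correct and takes essentially the same route as the paper, which obtains this corollary by specializing the preceding one to $p=Z_i$ with $r=i$, $s=1$, and sorted multiplicities $k_{j+1}=2^{j}$ for $1\le j\le i-1$ (mirroring the derivation of the lower bound~(\ref{eq41}) in the full word case). Your bookkeeping of the multiplicities via the doubling in $Z_i=Z_{i-1}a_iZ_{i-1}$, and the substitution $(s+1)!=2$ with exponent $\tfrac12$, are exactly what is needed.
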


Substituting for example $m=12$, $d=1/10$, and $i=3$, we find that
$$L_{1/10}(12,Z_3)\ge 23.709\cdots.$$

\section{Conclusion and open problems}

Using techniques from analytic combinatorics, we have calculated asymptotic
pattern occurrence statistics and used them in conjunction with the
probabilistic method to establish new results about Ramsey theoretic
pattern avoidance in the full word case (both nonabelian sense and abelian sense) and the partial word case. We have established, in particular, lower bounds for Ramsey lengths.

However, there may be more possible uses of these data in applications
such as cryptography and musicology. Cryptanalysts may compare the
pattern occurrence statistics of possible ciphertexts to those of random
noise to detect the existence of hidden messages; see the definitions
of semantic security and pseudorandom generator in \cite[pp.~67,~70]{katz}.
Musicologists may compare the pattern occurrence statistics of different
musical compositions to further their understanding of musical forms;
for previous work connecting music theory and theoretical computer science
see \cite{songs}.

We propose the following open problems.

\begin{prob}
Can you adapt the techniques appearing in this paper to the following two cases, which we have not considered, and get similar results?
\begin{itemize}
\item
When can a partial word avoid a given pattern in the abelian sense?
\item
When can a full necklace avoid a given pattern?
\end{itemize}
\end{prob}

\begin{prob}
Can you find better lower and upper bounds for the Ramsey lengths $L(m,p)$, $L_d(m,p)$, and $L_{\mathrm{ab}}(m,p)$ than the ones appearing in this paper?
\end{prob}

Referring back to our quote by Hermann Weyl, we propose the following open
problem regarding pattern subgroups of symmetry groups on factors of words:
\begin{prob}
For an arbitrary integer $n>0$ and pattern $p$, what is the subgroup of the
symmetry group of $1^n$ that is generated by the $p$ pattern symmetries
of the factor $1^n$?
\end{prob}


\section*{Acknowledgements}

We thank Andrew Lohr from Rutgers University, Brent Woodhouse from The University of California at Los Angeles, and Francine Blanchet-Sadri from The University of North Carolina at Greensboro
for their very valuable comments and suggestions. We thank Francine Blanchet-Sadri for giving us support
to work under her supervision.

\bibliographystyle{plain}
\bibliography{bibliography}
\end{document}